\DeclareMathAlphabet{\mathpzc}{OT1}{pzc}{m}{it}
\newtheorem{thm}{Theorem}[section]
\newtheorem{cor}[thm]{Corollary}
\newtheorem{lem}[thm]{Lemma}
\newtheorem{prop}[thm]{Proposition}
\theoremstyle{definition}
\newtheorem{defn}[thm]{Definition}
\newtheorem{rem}[thm]{Remark}
\newtheorem{example}[thm]{Example}
\numberwithin{equation}{section}
\begin{document}

\title{Moduli space of homologically trivial parabolic (Higgs) bundles on the projective line and applications}

\author{Xueqing Wen}

\address{{Address of author: Chongqing University of Technology, No. 69, Hongguang Avenue, Banan District, Chongqing, 400054, China.}}
	\email{\href{mailto:email address}{{wenxq@cqut.edu.cn}}}

\begin{abstract}
	
	We establish an isomorphism between the moduli space of homologically trivial parabolic (Higgs) bundles on $\mathbb{P}^1$ and the quiver variety associated to a star-shaped quiver. As applications, we deduce a closed formula for the Littlewood-Richardson coefficients from the Verlinde formula, and solve the nilpotent case of the Deligne-Simpson problem via geometric methods.
		
\end{abstract}

\maketitle
\tableofcontents

\section{Introduction}

Parabolic bundles on algebraic curves were introduced by Mehta and Seshadri in \cite{MS80}, providing objects that correspond to unitary representations of the fundamental group of a punctured Riemann surface. The moduli theory of parabolic bundles is particularly useful for giving a finite-dimensional proof—in the sense of Beauville \cite{B95}—of the Verlinde formula, which computes the dimension of spaces of nonabelian theta functions on a curve. In the works \cite{NR93}, \cite{Sun00}, \cite{Sun17}, and \cite{SZh18}, the computation of the Verlinde formula is reduced to computing the dimension of nonabelian parabolic theta functions on the projective line $\mathbb{P}^1$, using the method of degeneration of moduli spaces.

A theorem of Grothendieck states that every vector bundle on $\mathbb{P}^1$ decomposes as a direct sum of line bundles, so the classification of ordinary vector bundles on $\mathbb{P}^1$ is completely understood. The situation becomes more subtle and interesting when one considers parabolic bundles and parabolic Higgs bundles on $\mathbb{P}^1$. Parabolic Higgs bundles were first introduced and studied in \cite{Sim90}. In this paper, we denote by $\mathbf{M}_P$ the moduli space of semistable parabolic bundles of rank $r$ and degree $0$ on $\mathbb{P}^1$, and by $\mathbf{Higgs}^{\circ}_P$ the moduli space of homologically trivial semistable parabolic Higgs bundles. Beyond their role in the study of the Verlinde formula, these moduli spaces can also be related to certain quiver varieties.

A quiver is a finite oriented graph. One can define representations of quivers, and their moduli spaces are called quiver varieties. Quiver varieties are well studied and play an important role in representation theory and many other areas. A connection between quiver varieties and moduli spaces of parabolic Higgs bundles was described by Godinho and Mandini in \cite{GM13}. They established an isomorphism between the moduli space of rank two, homologically trivial parabolic Higgs bundles on $\mathbb{P}^1$ and the quiver variety of a certain star-shaped quiver. This result was later extended by Fisher and Rayan in \cite{FR16}, who constructed a similar isomorphism in arbitrary rank $r$; however, in their setting all parabolic structures consist of a one-dimensional flag, and the weights of the parabolic bundles were not fixed. In \cite{RSch20}, Rayan and Schaposnik further generalized the isomorphism to arbitrary parabolic types, but again without specifying the weights.

We establish, in arbitrary rank and for arbitrary parabolic structures, an isomorphism between the moduli space of parabolic bundles on $\mathbb{P}^1$ with a specified weight and the quiver variety associated to a specific star-shaped quiver. For a star-shaped quiver $Q$, we denote the related quiver varieties by $\mathpzc{R}_{\chi}(\mathbf{v})$ and $\mathfrak{M}_{\chi}(\mathbf{v})$ (see Section \ref{section 3} for details). Our main result is the following.

\begin{thm}[Theorem \ref{main1}, Theorem \ref{main2}]\label{thm1.1}
	There exist isomorphisms $\mathbf{M}_P \cong \mathpzc{R}_{\chi}(\mathbf{v})$ and $\mathbf{Higgs}^{\circ}_P \cong \mathfrak{M}_{\chi}(\mathbf{v})$, where the quiver $Q$, the character $\chi$, and the dimension vector $\mathbf{v}$ are determined by the parabolic data of the parabolic (Higgs) bundles.
\end{thm}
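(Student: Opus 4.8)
The plan is to produce both isomorphisms from a single explicit dictionary that converts the fibrewise flag data of a parabolic (Higgs) bundle into a representation of the star-shaped quiver $Q$, and back. The first step is to control the underlying vector bundle. By Grothendieck's theorem every bundle on $\mathbb{P}^1$ splits as a sum of line bundles, and I would verify that, for the small degree-$0$ weights fixed in Section~\ref{section 4}, parabolic semistability forces the underlying bundle of any point of $\mathbf{M}_P$ to be trivial: a summand $\mathcal{O}(a)$ with $a>0$ would provide a destabilizing parabolic subbundle once the weights are small. Granting $E\cong\mathcal{O}_{\mathbb{P}^1}^{\oplus r}$, the space $V_0:=H^0(\mathbb{P}^1,E)\cong\mathbb{C}^r$ is canonical and each evaluation $\mathrm{ev}_{p_i}\colon V_0\xrightarrow{\ \sim\ }E_{p_i}$ is an isomorphism, so every parabolic flag is transported to a flag of subspaces of the one fixed space $V_0$. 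This is exactly the data on the $i$-th arm of $Q$: the vertex dimensions are the dimensions of the flag steps, the arrows are the successive inclusions, and $V_0$ occupies the central vertex.

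With this dictionary I would define a morphism from the parameter scheme of semistable parabolic bundles to the representation space of $Q$, and an inverse assigning to a representation the bundle $\mathcal{O}^{\oplus r}$ equipped with the flags read off from the arm maps. These are manifestly mutually inverse on data; the content is that each is algebraic and equivariant, intertwining the relevant $\mathrm{GL}$-actions (bundle automorphisms on one side, the product $\prod_v\mathrm{GL}(v_v)$ at the vertices on the other). Passing to GIT quotients then requires the equivalence of stability notions, which is the heart of the argument. Choosing the character $\chi$ so that its entries along each arm are the successive jumps of the parabolic weights at the corresponding point, normalized so that $\chi\cdot\mathbf{v}=0$, I expect a direct slope computation to show that any subrepresentation $S$ corresponds to the parabolic subbundle $E'\subseteq E$ generated by $S_0\subseteq V_0$ with induced flags, and that King's inequality for $S$ is term-by-term the Mehta--Seshadri inequality $\mathrm{par}\deg E'\le(\mathrm{rk}\,E'/r)\,\mathrm{par}\deg E$. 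Since every parabolic subbundle arises this way and conversely, semistability transports in both directions, giving $\mathbf{M}_P\cong\mathpzc{R}_{\chi}(\mathbf{v})$; matching S-equivalence classes with closed polystable orbits upgrades the bijection to an isomorphism.

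For the Higgs case I would enrich the representation by the dual arrows of the doubled quiver. On $\mathbb{P}^1$ a strongly parabolic Higgs field $\phi\colon E\to E\otimes K_{\mathbb{P}^1}(D)$, with $D=p_1+\dots+p_k$ the reduced divisor of marked points, is determined by its residues $A_i=\mathrm{Res}_{p_i}\phi$, which are nilpotent and adapted to the flags and hence are encoded by the backward arm maps. Writing $\phi=\sum_i A_i\,dz/(z-p_i)$, the requirement that $\phi$ be a genuine global section of $K_{\mathbb{P}^1}(D)$ (no pole at infinity) is exactly $\sum_i A_i=0$, and together with the flag-compatibility along the arms this is precisely the moment-map (preprojective) relation cutting out $\mathfrak{M}_{\chi}(\mathbf{v})$ inside the doubled representation space. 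After checking that the homological-triviality condition defining $\mathbf{Higgs}^{\circ}_P$ matches the constraint imposed on the quiver side, the same dictionary upgrades to $\mathbf{Higgs}^{\circ}_P\cong\mathfrak{M}_{\chi}(\mathbf{v})$; this is the geometric source of the integrable-system and Deligne--Simpson applications advertised in the introduction, and is compatible with the rank-two and one-dimensional-flag cases of \cite{GM13} and \cite{FR16}.

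The step I expect to be the main obstacle is the uniform triviality of the underlying bundle together with the exact matching of stabilities on the boundary. Showing $E\cong\mathcal{O}^{\oplus r}$ for \emph{all} of $\mathbf{M}_P$, not merely generically, pins down how small the weights must be and requires ruling out every strictly-semistable configuration carrying a nontrivial summand; if this fails one is forced to realize nontrivial bundles by Hecke modifications, which distorts the clean arm picture. Equally delicate is promoting the set-theoretic bijection to a scheme isomorphism of GIT quotients, i.e.\ identifying S-equivalence of strictly semistable parabolic (Higgs) bundles with the closed $\chi$-polystable orbits; this is where I would spend the most care.
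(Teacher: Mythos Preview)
Your overall strategy coincides with the paper's: use condition~(\ref{Condition}) to force the underlying bundle to be $\mathcal{O}^{\oplus r}$ (Lemma~\ref{trivial  bundle}), transport the flags to $V=H^0(\mathbb{P}^1,E)$, and identify the two GIT quotients. The technical execution differs. You propose a direct comparison of King's $\chi$-semistability for the full group $GL(\mathbf{v})$ with parabolic semistability, matching subrepresentations to parabolic subbundles term by term. The paper instead factors the quotient in two stages (Lemma~\ref{two quotient}): writing $GL(\mathbf{v})=G_1\times G_2$ with $G_1=GL(r)$ at the central vertex and $G_2$ the product of the arm groups, it first quotients by $G_2$ and uses Lemma~\ref{A_n} to identify $\mathbf{R}//_{\chi_2}G_2$ with the product of flag varieties $\mathbf{F}$, then quotients $\mathbf{F}$ by $GL(r)$, where the Hilbert--Mumford computation already carried out in Section~\ref{section 2} shows this is $\mathbf{M}_P$. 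For the Higgs side the same factoring together with Lemma~\ref{A_n^*} (the identification $\tilde\mu^{-1}(0)//_{\chi}G\cong T^*\mathrm{Flag}$) reduces everything to the commutative square in the proof of Theorem~\ref{main2} and Proposition~\ref{HiggsP}.

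The paper's two-step route dissolves exactly the two obstacles you singled out. First, it bypasses the mismatch between subrepresentations and subbundles: a general subrepresentation of $\bar Q$ need not have injective arm maps and hence does not come from a parabolic subbundle, so your ``conversely'' needs an extra argument (that among all subrepresentations with a fixed central piece $W$, the one with maximal arm dimensions---which \emph{does} correspond to the induced parabolic subbundle---is the most destabilizing for $\chi$). Lemma~\ref{A_n} absorbs this by proving that arm-injectivity is automatic on the $\chi_2$-semistable locus. Second, because both sides are realized as successive GIT quotients of the same prequotient, the scheme-level isomorphism and the matching of S-equivalence classes with closed orbits are automatic, so the upgrade from set-theoretic bijection to isomorphism of varieties that you anticipated as delicate does not need a separate verification.
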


For the case of parabolic bundles, the relationship between the moduli stack of parabolic bundles and the moduli stack of quiver representations was noted by Soibelman in \cite{Soi16}, who studied the \textit{very good} property shared by these two stacks.

Theorem~\ref{thm1.1} leads to two direct applications.

\subsection{A closed formula for the Littlewood–Richardson coefficients}

The isomorphism $\mathbf{M}_P \cong \mathpzc{R}_{\chi}(\mathbf{v})$ is obtained from a construction in which the moduli space of parabolic bundles is presented as a geometric invariant theory (GIT) quotient of a product of partial flag varieties. By the Borel–Weil–Bott theorem, the space of global sections of a line bundle on a partial flag variety corresponds naturally to a finite-dimensional irreducible representation of $\operatorname{GL}_n$. Consequently, by studying line bundles on $\mathbf{M}_P$, their spaces of global sections yield information about representations of $\operatorname{GL}_n$.

The finite-dimensional irreducible representations of \(\operatorname{GL}_r\) are classified by partitions
\[
\mathcal{P}_r = \{\underline{\lambda}=(\lambda_1\geq \lambda_2\geq \cdots \geq \lambda_r) \mid \lambda_i \in \mathbb{Z}\}.
\]
For a partition \(\underline{\lambda}\), we denote by \(|\underline{\lambda}| = \sum_{i=1}^r \lambda_i\) its size, and by \(V(\underline{\lambda})\) the irreducible representation of \(\operatorname{GL}_r\) with highest weight \(\underline{\lambda}\) (unique up to isomorphism). Given two such representations \(V(\underline{\lambda})\) and \(V(\underline{\mu})\), the complete reducibility of \(\operatorname{GL}_r\) yields a decomposition
\[
V(\underline{\lambda}) \otimes V(\underline{\mu}) \cong \bigoplus_{\underline{\nu} \in \mathcal{P}_r} c_{\underline{\lambda}\,\underline{\mu}}^{\underline{\nu}} \; V(\underline{\nu}),
\]
where the non‑negative integers \(c_{\underline{\lambda}\,\underline{\mu}}^{\underline{\nu}}\) are the Littlewood–Richardson (\textbf{LR} for short) coefficients. They satisfy \(c_{\underline{\lambda}\,\underline{\mu}}^{\underline{\nu}} = 0\) unless \(|\underline{\lambda}| + |\underline{\mu}| = |\underline{\nu}|\).

\textbf{LR} coefficients appear not only in the decomposition of representations of \(\operatorname{GL}_r\), but also in the combinatorics of Young diagrams and tableaux, the intersection theory of Schubert varieties in Grassmannians, the eigenvalue problem for sums of Hermitian matrices, and the study of extensions of finite abelian groups; see \cite{HRLST12} for a survey.

When \(|\underline{\lambda}|+|\underline{\mu}|=|\underline{\nu}|\), the \textbf{LR} coefficient \(c_{\underline{\lambda}\,\underline{\mu}}^{\underline{\nu}}\) coincides with the dimension of the invariant subspace
\[
I_{\underline{\lambda}\,\underline{\mu}}^{\underline{\nu}}:=\big(V(\underline{\lambda})\otimes V(\underline{\mu})\otimes V(\underline{\nu}^*)\big)^{\operatorname{GL}_r}
=\big(V(\underline{\lambda})\otimes V(\underline{\mu})\otimes V(\underline{\nu}^*)\big)^{\operatorname{SL}_r},
\]
where \(\underline{\nu}^*=(-\nu_r \geq \cdots \geq -\nu_1)\) is the dual partition of \(\underline{\nu}\), so that \(V(\underline{\nu}^*)\cong V(\underline{\nu})^*\) as \(\operatorname{GL}_r\)-representations. The second equality follows because the central \(\mathbb{G}_m\subset \operatorname{GL}_r\) acts trivially on this space.

By the Borel--Weil--Bott theorem, one can construct three partial flag varieties together with line bundles
\[
\bigl(\mathbf{F}(\underline{\lambda}), \mathcal{L}(\underline{\lambda})\bigr),\quad 
\bigl(\mathbf{F}(\underline{\mu}),   \mathcal{L}(\underline{\mu})\bigr),\quad 
\bigl(\mathbf{F}(\underline{\nu}^*), \mathcal{L}(\underline{\nu}^*)\bigr),
\]
such that the spaces of global sections of these line bundles are isomorphic, as \(\operatorname{GL}_r\)-representations, to \(V(\underline{\lambda})\), \(V(\underline{\mu})\) and \(V(\underline{\nu}^*)\), respectively.  Setting
\[
\mathbf{F}:=\mathbf{F}(\underline{\lambda}) \times \mathbf{F}(\underline{\mu}) \times \mathbf{F}(\underline{\nu}^*),
\qquad 
\mathcal{L}:= \mathcal{L}(\underline{\lambda}) \boxtimes \mathcal{L}(\underline{\mu}) \boxtimes \mathcal{L}(\underline{\nu}^*),
\]
the group \(\operatorname{SL}_r\) acts diagonally on \(\mathbf{F}\), and one obtains an isomorphism
\[
I_{\underline{\lambda}\,\underline{\mu}}^{\underline{\nu}} \cong \operatorname{H}^0(\mathbf{F},\,\mathcal{L})^{\operatorname{SL}_r}.
\]

By the construction in the proof of Theorem~\ref{thm1.1}, the GIT quotient of \(\mathbf{F}\) by the diagonal action of \(\operatorname{SL}_r\) is isomorphic to the moduli space of semistable parabolic bundles on \(\mathbb{P}^1\). Moreover, the line bundle \(\mathcal{L}\) descends to the natural theta line bundle on this moduli space. Consequently, the invariant space \(I_{\underline{\lambda}\,\underline{\mu}}^{\underline{\nu}}\) is identified with the space of parabolic theta functions on \(\mathbb{P}^1\), whose dimension is given by the Verlinde formula as showed in \cite{SZh20}.

To state our closed formula explicitly, we introduce some further notations. For the three partitions \(\underline{\lambda}, \underline{\mu}, \underline{\nu}\) above, fix an integer \(K\) satisfying
\[
\frac{\lambda_1-\lambda_{r}+\mu_1-\mu_{r}+\nu_1-\nu_{r}}{K} < \frac{1}{r}.
\]
Define the transformed partition \({}^K\underline{\lambda}=({}^K\lambda_1\geq \cdots \geq {}^K\lambda_r)\) by setting \({}^K\lambda_i = K-\lambda_1+\lambda_i\). The partitions \({}^K\underline{\mu}\) and \({}^K\underline{\nu}^*\) are defined analogously.

\begin{thm}[Theorem \ref{thm closed formula}]\label{1.1}
	
	For the given partitions $\underline{\lambda}$, $\underline{\mu}$, $\underline{\nu}$ and $K$ we chosen above, the \emph{\textbf{LR}} coefficient can be calculated by 
	\[
	c_{\underline{\lambda}\,\underline{\mu}}^{\underline{\nu}}
	= \dfrac{1}{r(r+K)^{r-1}}
	\sum_{\overrightarrow{v}}
	\exp\!\Bigl(2\pi i\Bigl(-\frac{|\Sigma|}{r(r+K)}\Bigr)\sum_{i=1}^{r}v_i\Bigr)
	\Bigl(\prod_{i<j}\Bigl(2\sin\frac{\pi(v_i-v_j)}{r+K}\Bigr)^{2}\Bigr)
	S_{\Sigma}\!\Bigl(\exp 2\pi i\frac{\overrightarrow{v}}{r+K}\Bigr),
	\] 
	where $|\Sigma|=|{}^K\underline{\lambda}|+|{}^K\underline{\mu}|+|{}^K\underline{\nu}^*|$, $S_{\Sigma}=S_{{}^K\underline{\lambda}}S_{{}^K\underline{\mu}}S_{{}^K\underline{\nu}^*}$ is the product of Schur polynomials and the summation index $\overrightarrow{v}=(v_1, \cdots, v_r)$ runs through the integers $0=v_r< \cdots < v_2< v_1 < r+K$.
	
\end{thm}

A remark concerning the integer \(K\) is in order. The condition imposed on \(K\) is related to the notion of the critical level introduced by Belkale, Gibney and Mukhopadhyay in \cite{BGM15}; see Section~4.1 of that paper for details.

\subsection{The nilpotent case of the additive Deligne--Simpson problem}

A second application concerns the (additive) Deligne--Simpson problem, which may be formulated as follows: given \(n\) conjugacy classes \(\mathfrak{c}_i \subset \mathfrak{gl}_r\), does there exist a tuple of matrices \(A_i \in \mathfrak{c}_i\) such that \(\sum_{i=1}^n A_i = 0\)? This problem is naturally linked to the existence of certain Fuchsian systems on \(\mathbb{P}^1\). The Deligne--Simpson problem has been studied extensively by Simpson, Kostov, Crawley‑Boevey, and others; see for instance \cite{Sim92}, \cite{Kos96}, \cite{Kos03}, \cite{Kos04}, \cite{CB03} and \cite{Soi16}. In particular, Crawley‑Boevey \cite{CB03} established a correspondence between solutions of the Deligne--Simpson problem and representations of a certain star-shaped quiver, thereby providing a complete criterion for the existence of solutions.

Building on Crawley--Boevey's work and the isomorphism in Theorem~\ref{thm1.1}, we connect the solutions of the Deligne--Simpson problem to parabolic Higgs bundles on \(\mathbb{P}^1\). From the viewpoint of parabolic Higgs bundles, one studies their characteristic polynomials and the associated spectral curves (see Subsection~\ref{subsection 2.3}). This geometric perspective yields an incomplete solution to the nilpotent case of the Deligne--Simpson problem, independent of Crawley--Boevey's original results.

More precisely, if each conjugacy class \(\mathfrak{c}_i\) consists of matrices conjugate to a fixed nilpotent matrix \(N_i\) of rank \(\gamma_i\) (the so-called nilpotent case of the Deligne--Simpson problem), then we have

\begin{thm}[Theorem \ref{DSP}]\label{thm1.2}
	Suppose that either \(2r < \sum_{i=1}^n \gamma_i\), or \(2r = \sum_{i=1}^n \gamma_i\) and the greatest common divisor of the set \(\{\gamma_i\}\) is either \(1\) or does not divide \(r\). Then the nilpotent case of the Deligne--Simpson problem admits irreducible solutions.
\end{thm}

\begin{rem}
	\quad 
	\begin{itemize}
		\item[(1)] Our result partially coincides with those in \cite{Kos03}, \cite{CB03}, and \cite{Soi16}, but is obtained by a more geometric approach. Moreover, our method has the advantage of explicitly constructing solutions via line bundles on the normalization of spectral curves.
		\item[(2)] The multiplicative Deligne--Simpson problem has recently been completely solved by \cite{Shu25} and \cite{CBH25}. Another perspective, closely related to the parabolic Hitchin system on \(\mathbb{P}^1\), is presented in \cite{LL25}. The Deligne--Simpson problem for irregular \(G\)-connections on \(\mathbb{P}^1\) has also been studied by Jakob and Yun in \cite{JY23}; their work is also likewise connected to the moduli space of Higgs bundles on \(\mathbb{P}^1\).
	\end{itemize}
\end{rem}

This paper is organized as follows.

Section~\ref{section 2} recalls the definition and basic properties of parabolic (Higgs) bundles. Under a suitable choice of weights (Condition~\ref{Condition}), we give an explicit construction of the corresponding moduli spaces. We also examine the parabolic Hitchin map in this setting and review relevant results from \cite{SWW19}.

In Section~\ref{section 3} we review the notion of quivers and the construction of quiver varieties. Focusing on the ``star-shaped'' quiver, we then present a proof of Theorem~\ref{thm1.1}.

Section~\ref{section 4} explains how the \textbf{LR} coefficients can be interpreted via spaces of theta functions on the moduli space \(\mathbf{M}_P\). Using the Verlinde formula, we derive from this identification a closed formula for the \textbf{LR} coefficients.

Finally, Section~\ref{section 5} establishes a correspondence between homologically trivial parabolic Higgs bundles and solutions to the nilpotent case of the Deligne--Simpson problem. By employing the parabolic Hitchin map and the parabolic BNR correspondence (Theorem~\ref{parabolic BNR}), we obtain a geometric solution to nilpotent case of Deligne-Simpson problem.

\quad

\noindent\textbf{Acknowledgements.} This work combines results from two preprints completed in 2020 and 2021. I am grateful to Jia Choon Lee for encouraging me to publish. I would also like to thank Xiaotao Sun, Xiaoyu Su, Bin Wang and Bingyu Zhang for many helpful discussions and suggestions.

X. Wen is supported from the Chongqing Natural Science Foundation Innovation and Development Joint Fund (CSTB2023NSCQ-LZX0031).

\section{Moduli space of parabolic Higgs bundles over projective line}\label{section 2}

\subsection{Parabolic bundles on \(\mathbb{P}^1\)}

We work over the field \(\mathbb{C}\). Let \(\mathbb{P}^1\) be the projective line, and let \(I = \{x_1,\dots, x_n\} \subset \mathbb{P}^1\) be a finite subset with \(n \geq 3\). Fix a positive integer \(K\). A parabolic structure on a rank \(r\) vector bundle \(E\) over \(\mathbb{P}^1\) consists of the following data:

\begin{enumerate}
	\item[(1)] A flag at each point \(x \in I\):
	\[
	E|_x = F^0(E_x) \supset F^1(E_x) \supset \cdots \supset F^{\sigma_x}(E_x) = 0 .
	\]
	Set \(n_i(x) = \dim F^{\,i-1}(E_x) - \dim F^{\,i}(E_x)\) for \(1 \le i \le \sigma_x\).
	
	\item[(2)] A strictly increasing sequence of integers attached to each \(x \in I\):
	\[
	0 \le a_1(x) < a_2(x) < \cdots < a_{\sigma_x}(x) < K .
	\]
	These numbers are called the weights.
\end{enumerate}

Equipped with the data above, we call \(E\) a parabolic vector bundle of type 
\[
\Sigma:=\bigl\{I,\,K,\,\{n_i(x)\},\,\{a_i(x)\}\bigr\}.
\]
When the parabolic type \(\Sigma\) is understood, we simply refer to \(E\) as a parabolic vector bundle.

The parabolic degree of \(E\) is defined as
\[
\operatorname{pardeg} E = \deg E + \frac{1}{K}\sum_{x \in I}\sum_{i=1}^{\sigma_x} a_i(x)\, n_i(x).
\]
A parabolic bundle \(E\) is called semistable (resp.~stable) if for every nontrivial subbundle \(F \subset E\) endowed with the induced parabolic structure,
\[
\mu_{\mathrm{par}}(F) := \frac{\operatorname{pardeg} F}{\operatorname{rk} F} \;\leq\; \mu_{\mathrm{par}}(E) 
\qquad\bigl(\text{resp. } \mu_{\mathrm{par}}(F) < \mu_{\mathrm{par}}(E)\bigr).
\]

The construction of moduli spaces of semistable parabolic bundles is treated in \cite{MS80} and \cite{Sun17}. In the present work we focus on the moduli space of rank \(r\), degree \(0\) semistable parabolic bundles on \(\mathbb{P}^1\) whose parabolic degree is sufficiently small. Under this condition we will give an explicit description of the moduli space.

Before proceeding, we impose the following condition on the weights, which will be used throughout the sequel:
\begin{align}\label{Condition}
	\frac{1}{K}\sum_{x\in I}a_{\sigma_x}(x)< \frac{1}{r}.
\end{align}

\begin{lem}\label{trivial bundle}
	Assume Condition~\ref{Condition} holds. If \(E\) is a semistable parabolic vector bundle of rank \(r\) and degree \(0\), then \(E\) is homologically trivial; i.e. \(E \cong \mathcal{O}_{\mathbb{P}^1}^{\oplus r}\) as a vector bundle.
\end{lem}

\begin{proof}
	Let \(F \subset E\) be a proper subbundle. Its parabolic degree is
	\[
	\operatorname{pardeg} F = \deg F + \frac{1}{K}\sum_{x \in I}\sum_{i=1}^{\sigma_x} a_i(x)\, n_i^F(x),
	\]
	where \(n_i^F(x)=\dim\!\big(F^{\,i-1}(E_x) \cap F|_x\big) - \dim\!\big(F^{\,i}(E_x) \cap F|_x\big)\).  
	The semistability of \(E\) implies
	\[
	\frac{\deg F}{\operatorname{rk} F} - \frac{\deg E}{\operatorname{rk} E}
	\le \frac{1}{K}\sum_{x\in I}\sum_{i=1}^{\sigma_x} a_i(x)
	\Bigl(\frac{n_i(x)}{\operatorname{rk} E} - \frac{n_i^F(x)}{\operatorname{rk} F}\Bigr),
	\]
	which, since \(\deg E = 0\), simplifies to
	\[
	\frac{\deg F}{\operatorname{rk} F}
	\le \frac{1}{K}\sum_{x\in I}\sum_{i=1}^{\sigma_x} a_i(x)
	\Bigl(\frac{n_i(x)}{\operatorname{rk} E} - \frac{n_i^F(x)}{\operatorname{rk} F}\Bigr).
	\]
	
	Condition~\ref{Condition} guarantees that the right‑hand side of the above inequality is strictly smaller than \(1/r\). Consequently, \(\deg F \le 0\) for every proper subbundle \(F\). By Grothendieck’s classification of vector bundles on \(\mathbb{P}^1\), this forces \(E\) to be homologically trivial, i.e. \(E \cong \mathcal{O}_{\mathbb{P}^1}^{\oplus r}\).
\end{proof}

\begin{example}
	We provide a counterexample to Lemma~\ref{trivial bundle} when Condition~\ref{Condition} is not satisfied.  
	Let \(I = \{x_1,\dots, x_4\}\), \(K=2\), and consider the vector bundle
	\(E = \mathcal{O}_{\mathbb{P}^1}(-1) \oplus \mathcal{O}_{\mathbb{P}^1}(1)\).  
	We endow \(E\) with the following parabolic structure:
	
	\begin{itemize}
		\item[(1)] At each \(x_i \in I\) the flag is
		\[
		\mathcal{O}_{\mathbb{P}^1}(-1)|_{x_i} \oplus \mathcal{O}_{\mathbb{P}^1}(1)|_{x_i}
		\;\supset\; \mathcal{O}_{\mathbb{P}^1}(-1)|_{x_i} \;\supset\; 0;
		\]
		\item[(2)] The weights are \(a_1(x_i)=0,\; a_2(x_i)=1\) for every \(x_i\).
	\end{itemize}
	
	Condition~\ref{Condition} clearly fails in this case.  We now briefly verify that \(E\) is nevertheless semistable.
	
	First, \(\mu_{\mathrm{par}}(E)=1\).  Consider proper sub‑line‑bundles of \(E\).  
	 When $\mathcal{O}_{\mathbb{P}^1}(1)$ is considered as a subbundle of $E$, then it must be a direct summand; with the induced parabolic structure one checks \(\mu_{\mathrm{par}}\bigl(\mathcal{O}_{\mathbb{P}^1}(1)\bigr)=1\).  
	The bundle \(\mathcal{O}_{\mathbb{P}^1}\) cannot be a subbundle of \(E\).  
	For any \(\mathcal{O}_{\mathbb{P}^1}(-n)\) with \(n\ge 1\) that injects into \(E\), one finds \(\mu_{\mathrm{par}}\bigl(\mathcal{O}_{\mathbb{P}^1}(-n)\bigr) \le 1\).  
	Hence all proper subbundles satisfy the semistability inequality, so \(E\) is a semistable parabolic bundle of degree \(0\) on \(\mathbb{P}^1\) that is not homologically trivial.
\end{example}

\begin{lem}\label{sub trivial bundle}
	Assume Condition~\ref{Condition} holds. A homologically trivial parabolic vector bundle \(E\) is semistable if and only if for every homologically trivial subbundle \(F\subset E\) one has
	\[
	\frac{\operatorname{pardeg} F}{\operatorname{rk} F} \le \frac{\operatorname{pardeg} E}{\operatorname{rk} E}.
	\]
\end{lem}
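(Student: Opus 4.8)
The plan is to prove the two implications separately, with the forward direction immediate and the reverse direction resting on a single degree estimate supplied by condition (\ref{Condition}).

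For the ``only if'' direction there is nothing to do: semistability of $E$ means $\mu_{par}(F)\leq \mu_{par}(E)$ for \emph{every} nontrivial subbundle $F$, hence in particular for every homologically trivial one. For the ``if'' direction I would take an arbitrary proper subbundle $F\subseteq E$ of rank $s$ and verify the semistability inequality. If $F$ is homologically trivial this is exactly the hypothesis, so the real content is to dispose of the subbundles that are \emph{not} homologically trivial, and the idea is that condition (\ref{Condition}) forces any such $F$ to have strictly negative parabolic slope.

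Concretely, I would first record the elementary fact that $\sum_{i=1}^{\sigma_x} n_i^F(x)=\dim F|_x = s$ (the sum telescopes, since $F$ is a subbundle so its fibre has dimension exactly $s$). Using $0\le a_i(x)\le a_{\sigma_x}(x)$ this gives the crude bound
$$\frac{1}{K}\sum_{x\in I}\sum_{i=1}^{\sigma_x}a_i(x)n_i^F(x)\le \frac{s}{K}\sum_{x\in I}a_{\sigma_x}(x)<\frac{s}{r},$$
where the last inequality is precisely condition (\ref{Condition}). Next, since $E\cong \mathcal{O}_{\mathbb{P}^1}^{\oplus r}$, any subbundle decomposes as $F\cong\bigoplus_j \mathcal{O}_{\mathbb{P}^1}(d_j)$ with all $d_j\le 0$; if $F$ is not homologically trivial then some $d_j<0$, whence $\deg F\le -1$. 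Combining these, $\mathrm{pardeg}\,F<-1+s/r\le 0$, so $\mu_{par}(F)<0$. On the other hand $\mathrm{pardeg}\,E\ge 0$, because $\deg E=0$ and its weight term is nonnegative, so $\mu_{par}(E)\ge 0$. Therefore $\mu_{par}(F)<0\le \mu_{par}(E)$ and the inequality holds automatically for every non-homologically-trivial subbundle.

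Putting the two cases together shows that verifying the inequality on homologically trivial subbundles alone already implies semistability. The only point requiring care, and really the heart of the matter, is the degree estimate: one must ensure the total weight contribution of any subbundle is bounded by $s/r$, which is exactly what condition (\ref{Condition}) guarantees, and that a non-homologically-trivial subbundle of the trivial bundle has degree at most $-1$. I do not expect any serious obstacle, since this is the same bookkeeping already used in the proof of the preceding lemma.
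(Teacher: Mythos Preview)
Your proof is correct and follows essentially the same approach as the paper: both arguments observe that a non-homologically-trivial subbundle of $\mathcal{O}_{\mathbb{P}^1}^{\oplus r}$ has degree at most $-1$, and both use condition (\ref{Condition}) to bound the weight contribution so that the degree deficit cannot be recovered. The only cosmetic difference is that the paper bounds the difference $\mu_{par}(F)-\mu_{par}(E)$ directly (showing it is at most $\deg F/\mathrm{rk}\,F + 1/r < 0$), whereas you bound $\mu_{par}(F)<0$ and $\mu_{par}(E)\ge 0$ separately; the underlying estimate is the same.
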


\begin{proof}
	One implication is immediate. For the converse, suppose every homologically trivial subbundle satisfies the inequality, and let \(F\subset E\) be an arbitrary subbundle that is \emph{not} homologically trivial. Then \(\deg F < 0\). Using \(\deg E = 0\) we obtain
	\begin{align*}
		\frac{\text{pardeg}F}{\text{rk}F}-\frac{\text{pardeg}E}{\text{rk}E}&=\frac{\text{deg}F}{\text{rk}F}+\frac{1}{K}\sum_{x\in I}\sum_{i=1}^{\sigma_x}a_i(x)\big(\frac{n_i(x)}{\text{rk}E}-\frac{n_i^F(x)}{\text{rk}F}\big)\\
		&\leq \frac{\text{deg}F}{\text{rk}F}+\frac{1}{r}<0.
	\end{align*}

	Thus it suffices to test homologically trivial ones to make the semistability inequality holds for all subbundles.
\end{proof}

\begin{example}\label{key example}
	We now give a counterexample to Lemma~\ref{sub trivial bundle} when Condition~\ref{Condition} is violated.  
	
	Let \(V\) be a two‑dimensional vector space and identify \(\mathbb{P}(V)\cong\mathbb{P}^1\).  
	Consider the trivial rank \(2\) bundle \(E = \mathbb{P}(V)\times V\).  
	It contains a canonical line subbundle \(\mathcal{L}\) whose fibre over a point \([l]\in\mathbb{P}(V)\) (corresponding to a line \(l\subset V\)) is precisely \(l\subset V = E|_{[l]}\).
	
	Choose four distinct points \([l_1],\dots,[l_4]\in\mathbb{P}(V)\) and set \(K=4\).  
	Equip \(E\) with the parabolic structure defined by:
	\begin{itemize}
		\item[(1)] At each \([l_i]\) the flag \(E|_{[l_i]}=V \supset l_i \supset 0\);
		\item[(2)] Weights \(a_1([l_i])=0,\; a_2([l_i])=3\) for every \(i\).
	\end{itemize}
	This choice does not satisfy Condition~\ref{Condition}.
	
	We claim that every homologically trivial subbundle \(F\subset E\) satisfies \(\mu_{\mathrm{par}}(F) < \mu_{\mathrm{par}}(E)\), whereas \(\mu_{\mathrm{par}}(\mathcal{L}) > \mu_{\mathrm{par}}(E)\), showing that Condition~\ref{Condition} cannot be omitted.
	
	First, \(\mu_{\mathrm{par}}(E) = \frac{3}{2}\).  
	A homologically trivial subbundle \(F\) corresponds to a line \(W\subset V\).  
	A case‑by‑case check (according to whether \(W\) coincides with some \(l_i\) or not) shows that \(\mu_{\mathrm{par}}(F) < \frac{3}{2}\).  
	On the other hand,
	\[
	\mu_{\mathrm{par}}(\mathcal{L}) = \deg\mathcal{L} + \frac{1}{4}\sum_{i=1}^4 3
	= -1 + \frac{12}{4} = 2 > \frac{3}{2},
	\]
	so \(\mathcal{L}\) violates the semistability condition.  
	Hence the conclusion of Lemma~\ref{sub trivial bundle} fails without Condition~\ref{Condition}.
\end{example}

We now construct the moduli space of semistable parabolic vector bundles of rank \(r\) and degree \(0\) under Condition~\ref{Condition}.

By Lemma~\ref{trivial bundle}, every such semistable parabolic vector bundle is isomorphic, as a vector bundle, to \(\mathcal{O}_{\mathbb{P}^1}^{\oplus r}\).  
Set \(V = \operatorname{H}^0(\mathbb{P}^1, \mathcal{O}_{\mathbb{P}^1}^{\oplus r})\).  
All possible parabolic structures on \(\mathcal{O}_{\mathbb{P}^1}^{\oplus r}\) are parametrized by the product of partial flag varieties
\[
\mathbf{F}:=\prod_{x\in I} \operatorname{Flag}\bigl(V,\; \overrightarrow{\gamma}(x)\bigr),
\]
where \(\operatorname{Flag}(V,\overrightarrow{\gamma}(x))\) is the variety of partial flags in \(V\) with dimension vector
\[
\overrightarrow{\gamma}(x)=\bigl(\gamma_1(x), \gamma_2(x), \dots , \gamma_{\sigma_x-1}(x)\bigr),
\qquad \gamma_i(x)=\sum_{j=i+1}^{\sigma_x}n_j(x).
\]
The group \(\operatorname{SL}(V)\) acts diagonally on \(\mathbf{F}\); two points of \(\mathbf{F}\) correspond to isomorphic parabolic bundles precisely when they lie in the same \(\operatorname{SL}(V)\)-orbit.

We polarize the \(\operatorname{SL}(V)\)-action on \(\mathbf{F}\) by the tuple
\[
\prod_{x\in I}\bigl(d_{1}(x),\dots, d_{\sigma_x-1}(x)\bigr),
\qquad d_i(x)=a_{i+1}(x)-a_i(x).
\]
By the Hilbert–Mumford criterion, a point
\[
q=\prod_{x\in I}\bigl(q_1(x)\supset \cdots \supset q_{\sigma_x-1}(x)\bigr)\in \mathbf{F}
\]
is GIT semistable if and only if for every subspace \(W\subset V\),
\begin{align}\label{GIT semistable}
	\frac{\sum_{x\in I}\sum_{i=1}^{\sigma_x-1}d_{i}(x)\dim\!\big(W\cap q_i(x)\big)}
	{\sum_{x\in I}\sum_{i=1}^{\sigma_x-1}d_{i}(x)\dim\!\big(q_i(x)\big)}
	\le \frac{\dim W}{r}.
\end{align}
After rearranging and letting \(E\) be the parabolic bundle corresponding to \(q\), one checks that this inequality is equivalent to
\[
\frac{\operatorname{pardeg}(W\otimes \mathcal{O}_{\mathbb{P}^1})}{\dim W}
\le \frac{\operatorname{pardeg} E}{r}.
\]
Lemma~\ref{sub trivial bundle} then shows that GIT semistability coincides with parabolic semistability. Hence we obtain:

\begin{prop}\label{moduli of bundle}
	Under Condition~\ref{Condition}, the moduli space of semistable parabolic vector bundles of rank \(r\) and degree \(0\) on \(\mathbb{P}^1\) is isomorphic to the GIT quotient \(\mathbf{F}/\!/\operatorname{SL}(V)\) with the polarization described above.
\end{prop}

\begin{rem}\label{key remark}
	When Condition~\ref{Condition} is not satisfied, Example~\ref{key example} shows that the moduli space may not be realizable as a GIT quotient of a product of partial flag varieties. Indeed, the space of weights for parabolic bundles exhibits a ``wall and chamber'' structure \cite{BH95, Tha96}. Our Condition~\ref{Condition} ensures that the chosen weights are sufficiently small, i.e., they lie in a chamber close to the origin. As shown in \cite{MY16}, Proposition~3.7, if the weights are varied, the moduli space of semistable parabolic vector bundles can become a blow‑up of \(\mathbf{F}/\!/\operatorname{SL}(V)\) at a point.
\end{rem}

\subsection{Parabolic Higgs bundles on \(\mathbb{P}^1\)}

We now turn to parabolic Higgs bundles on \(\mathbb{P}^1\) and their moduli.  
Let \(D_I = \sum_{x\in I}x\) be a reduced effective divisor on \(\mathbb{P}^1\).  
A parabolic Higgs bundle is a pair \((E,\phi)\) where \(E\) is a parabolic vector bundle and
\[
\phi : E \longrightarrow E \otimes \omega_{\mathbb{P}^1}(D_I)
\]
is a Higgs field that is strongly compatible with the parabolic structure: for every \(x \in I\) and every \(i\),
\[
\phi_x\bigl(F^i(E_x)\bigr) \subset F^{i+1}\!\bigl((E\otimes\omega_{\mathbb{P}^1}(D_I))_x\bigr).
\]
A parabolic Higgs bundle \((E,\phi)\) is called semistable (resp.~stable) if for every proper Higgs subbundle \((F,\phi')\subset(E,\phi)\) one has
\[
\frac{\operatorname{pardeg} F}{\operatorname{rk} F} \;\le\; \frac{\operatorname{pardeg} E}{\operatorname{rk} E}
\qquad\bigl(\text{resp. } < \bigr).
\]

Before constructing the moduli space of parabolic Higgs bundles, we illustrate the definition with an example.

\begin{example}
	Take \(I=\{x_1,\dots,x_4\}\subset\mathbb{P}^1\), \(K=16\), and the vector bundle
	\(E = \mathcal{O}_{\mathbb{P}^1}(1) \oplus \mathcal{O}_{\mathbb{P}^1}(-1)\).  
	Equip \(E\) with the parabolic structure defined by
	\begin{itemize}
		\item[(1)] the flag
		\[
		\mathcal{O}_{\mathbb{P}^1}(-1)|_{x_i} \oplus \mathcal{O}_{\mathbb{P}^1}(1)|_{x_i}
		\;\supset\; \mathcal{O}_{\mathbb{P}^1}(-1)|_{x_i} \;\supset\; 0
		\qquad (i=1,\dots,4);
		\]
		\item[(2)] the weights \(a_1(x_i)=0,\; a_2(x_i)=1\) for every \(x_i\).
	\end{itemize}
	This choice of weights satisfies Condition~\ref{Condition}; consequently, by Lemma~\ref{trivial bundle}, \(E\) is not semistable as a parabolic bundle.  
	Now fix a non‑zero morphism \(\phi_1:\mathcal{O}_{\mathbb{P}^1}(1)\to\mathcal{O}_{\mathbb{P}^1}(-1)\otimes\omega_{\mathbb{P}^1}(D_I)\) and define
	\[
	\phi = \begin{bmatrix} 0 & 0 \\ \phi_1 & 0 \end{bmatrix} : E \longrightarrow E\otimes\omega_{\mathbb{P}^1}(D_I).
	\]
	One checks that \(\phi\) is a strongly compatible parabolic Higgs field.  
	Since \(\mathcal{O}_{\mathbb{P}^1}(1)\) is not a Higgs subbundle of \((E,\phi)\), it is straightforward to verify that \((E,\phi)\) is a stable parabolic Higgs bundle.
\end{example}

Thus, in contrast to the case of parabolic vector bundles, a semistable parabolic Higgs bundle need not have an underlying homologically trivial vector bundle even when Condition~\ref{Condition} is imposed. For this reason we restrict attention to parabolic Higgs bundles whose underlying vector bundle is homologically trivial.

\begin{rem}
	Moduli spaces of semistable parabolic Higgs bundles on a curve \(X\) were constructed in \cite{Yo93}; the locus of homologically trivial bundles forms an open subset of this moduli space.
\end{rem}

\begin{lem}
	Assume Condition~\ref{Condition} holds. A homologically trivial parabolic Higgs bundle \((E,\phi)\) is semistable if and only if for every proper homologically trivial Higgs subbundle \((F,\phi')\subset(E,\phi)\),
	\[
	\frac{\operatorname{pardeg} F}{\operatorname{rk} F} \leq \frac{\operatorname{pardeg} E}{\operatorname{rk} E}.
	\]
\end{lem}

\begin{proof}
	The argument is the same as that of Lemma~\ref{sub trivial bundle}.
\end{proof}

Before constructing the moduli space of semistable homologically trivial parabolic Higgs bundles, we examine parabolic Higgs fields \(\phi\) more closely.  
Let \(\operatorname{Hom}_{\mathrm{spar}}(E, E\otimes \omega_{\mathbb{P}^1}(D_I))\) denote the space of all parabolic Higgs fields on a parabolic bundle \(E\).  
Fix an isomorphism \(\operatorname{H}^0(\mathbb{P}^1,E)\cong V\).  Then for each \(x\in I\), the filtration on \(E|_x\) induces a filtration on \(V\).  
Recall that
\[
\phi \in \operatorname{Hom}(E, E\otimes \omega_{\mathbb{P}^1}(D_I)) 
\cong \operatorname{Hom}_\mathbb{C}\!\bigl(V,\; V\otimes \operatorname{H}^0(\mathbb{P}^1, \omega_{\mathbb{P}^1}(D_I))\bigr).
\]

For any \(x\in I\), the residue map
\[
\operatorname{Res}_x : \operatorname{H}^0(\mathbb{P}^1, \omega_{\mathbb{P}^1}(D_I)) \longrightarrow \omega_{\mathbb{P}^1}(D_I)|_x
\]
gives a composed map \(\operatorname{Res}_x\circ\phi : V \to V\otimes \omega_{\mathbb{P}^1}(D_I)|_x\).  
A morphism \(\phi\) is a parabolic Higgs field if and only if for every \(x\in I\) the map \(\operatorname{Res}_x\circ\phi\) preserves strongly the filtration on \(V\) induced from \(E|_x\).  
Write \(\operatorname{Hom}^{\mathrm{s.f.}}(V, V\otimes \omega_{\mathbb{P}^1}(D_I)|_x)\) for the space of such maps.

Since \(\deg \omega_{\mathbb{P}^1}(D_I)=n-2\), we have the exact sequence
\[
0 \longrightarrow H^0(\mathbb{P}^1, \omega_{\mathbb{P}^1}(D_I)) 
\longrightarrow \bigoplus_{x\in I} \omega_{\mathbb{P}^1}(D_I)|_x 
\longrightarrow k \longrightarrow 0 .
\]
Choosing the local basis \(dz/(z-x)\) for each \(\omega_{\mathbb{P}^1}(D_I)|_x\), we obtain
\begin{equation}\label{exseq}
	0 \rightarrow \operatorname{Hom}_{\mathrm{spar}}(E, E\otimes \omega_{\mathbb{P}^1}(D_I))
	\hookrightarrow \bigoplus_{x\in I} \operatorname{Hom}^{\mathrm{s.f.}}(V,V)
	\xrightarrow{\;\,\sum\;\,} \operatorname{Hom}(V,V) \rightarrow 0 .
\end{equation}

So the exact sequence~\eqref{exseq} shows that giving a parabolic Higgs field on a homologically trivial parabolic bundle \(E\) is equivalent to giving \(n\) linear maps \(A_x:V\to V\;(x\in I)\) satisfying certain nilpotency conditions, such that \(\sum_{x\in I} A_x = 0\).

We now construct the desired moduli space. Since we consider only homologically trivial bundles, all parabolic structures are again parametrized by \(\mathbf{F}\).  
Observe that the middle term of the exact sequence~\eqref{exseq} is the cotangent space to \(\mathbf{F}\) at a point.  
From the discussion above we obtain a morphism of vector bundles over \(\mathbf{F}\),
\[
\mu_P : T^*\mathbf{F} \longrightarrow \mathscr{H}\!om(V,V),
\]
whose kernel \(\mathfrak{F}= \ker \mu_P\) parametrizes all parabolic Higgs bundles under consideration.  
Thus we obtain the following description.

\begin{prop}\label{HiggsP}
	Under Condition~\ref{Condition}, the moduli space \(\mathbf{Higgs}^{\circ}_P\) of homologically trivial semistable parabolic Higgs bundles is isomorphic to the GIT quotient \(\mathfrak{F}/\!/\operatorname{SL}(V)\), with the polarization chosen earlier.
\end{prop}

\begin{rem}\label{key example 1}
	\quad 
	\begin{itemize}
		\item[(1)] Consider the parabolic bundle \(E\) from Example~\ref{key example} equipped with the zero Higgs field \(\phi=0\).  
		For every homologically trivial Higgs subbundle \(F\subset E\) one has \(\mu_{\mathrm{par}}(F) < \mu_{\mathrm{par}}(E)\); nevertheless, \(E\) itself is not a semistable parabolic Higgs bundle.  
		Hence Condition~\ref{Condition} is also essential in the parabolic Higgs bundle setting.
		
		\item[(2)] For the details of performing the GIT quotient on the bundle \(\mathfrak{F}\), we refer to \cite{Ni91}.
	\end{itemize}
\end{rem}

\subsection{Parabolic Hitchin map}\label{subsection 2.3}

Let \((E,\phi)\) be a parabolic Higgs bundle. Its characteristic polynomial is defined as
\[
\operatorname{char}(E,\phi)=\lambda^{r}+\alpha_{1}\lambda^{r-1}+\cdots+\alpha_{r-1}\lambda+\alpha_{r},
\]
where \(\alpha_i = (-1)^i\operatorname{Tr}(\wedge^{i}\phi) \in \operatorname{H}^{0}\bigl(\mathbb{P}^{1},\omega_{\mathbb{P}^{1}}(D_I)^{\otimes i}\bigr)\).  
Writing the polynomial by its coefficients, we may regard
\[
\operatorname{char}(E,\phi)=\alpha=(\alpha_i)_{1\le i\le r}\in 
\mathbf{H}:=\prod_{i=1}^{r} \operatorname{H}^{0}\bigl(\mathbb{P}^{1},\omega_{\mathbb{P}^{1}}(D_I)^{\otimes i}\bigr).
\]

Globally, let \(\mathbf{Higgs}_P\) denote the moduli space of semistable parabolic Higgs bundles.  
There is an algebraic morphism, the parabolic Hitchin map,
\[
h_P:\mathbf{Higgs}_P\longrightarrow \mathbf{H},
\]
which sends the S‑equivalence class of a semistable parabolic Higgs bundle \((E,\phi)\) to \(\operatorname{char}(E,\phi)\).  
For a detailed discussion of this construction we refer to \cite{SWW19}.

Because the parabolic Higgs field satisfies \(\phi\bigl(F^{i}(E_x)\bigr)\subseteq F^{i+1}\!\bigl((E\otimes\omega_{\mathbb{P}^{1}}(D_I))_x\bigr)\) at every marked point \(x\in I\), the residue of \(\phi\) at each \(x\) is nilpotent. Consequently, the map \(h_P\) is not surjective.

To describe the image of \(h_P\), we associate with a fixed parabolic type \(\Sigma=\{I,K,\{n_i(x)\},\{a_i(x)\}\}\) the following combinatorial numbers for each \(x\in I\) and each \(1\le j\le r\):
\begin{align*}
	\mu_j(x) &=\#\bigl\{l \mid n_l \ge j,\; 1\le l\le \sigma_x\bigr\},\\[2mm]
	\varepsilon_j(x) &= l \quad\Longleftrightarrow\quad \sum_{t\le l-1}\mu_t(x) < j \le \sum_{t\le l}\mu_t(x).
\end{align*}
Note that \(\varepsilon_r(x) = \max\{\,n_i(x)\,\}\).

\begin{prop}[\cite{SWW19}, Theorem 3.4]
	The image of \(h_P\) is contained in the subspace
	\[
	\mathbf{H}_P:=\prod_{j=1}^{r} \operatorname{H}^{0}\Bigl(\mathbb{P}^{1},\,
	\omega_{\mathbb{P}^{1}}^{\otimes j}\otimes\mathcal{O}_{\mathbb{P}^{1}}
	\bigl(\sum_{x\in I}(j-\varepsilon_j(x))x\bigr)\Bigr)
	\]
	of \(\mathbf{H}\).  The induced morphism \(h_P:\mathbf{Higgs}_P\to\mathbf{H}_P\) is called the parabolic Hitchin map, and \(\mathbf{H}_P\) is called the parabolic Hitchin base.
\end{prop}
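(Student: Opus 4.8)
The plan is to reduce the statement to a purely local pole-order estimate at each puncture and then to a combinatorial identity. Each coefficient $\alpha_j=(-1)^j\text{Tr}(\wedge^j\phi)$ is \emph{a priori} a section of $\omega_{\mathbb{P}^1}(D_I)^{\otimes j}=\omega_{\mathbb{P}^1}^{\otimes j}(jD_I)$, and away from $I$ the field $\phi$ is an honest $\text{End}(E)$-valued form, so $\alpha_j$ is regular there. Hence the whole content is that at each $x\in I$ the pole order of $\alpha_j$ is at most $j-\varepsilon_j(x)$ rather than the naive $j$. Choosing a local coordinate $w=z-x$ and the generator $dw/w$ of $\omega_{\mathbb{P}^1}(D_I)$ near $x$, I would write $\phi=A(w)\,dw/w$ with $A(w)$ holomorphic and $A(0)=\text{Res}_x\phi=:N$. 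Then $\alpha_j=(-1)^j c_j(w)\,(dw/w)^j$, where $c_j(w)$ is the $j$-th elementary symmetric function of the eigenvalues of $A(w)$, a holomorphic function; the desired pole bound is exactly the claim that $c_j$ vanishes to order at least $\varepsilon_j(x)$ at $w=0$.

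First I would record the local structure coming from strong preservation of the flag. Since $\phi(F^i(E_x))\subseteq F^{i+1}(\cdot)$, the residue satisfies $N(F^i(E_x))\subseteq F^{i+1}(E_x)$, so in a basis adapted to the flag (grouping the basis vectors by the level $i$ of the graded piece $F^{i-1}(E_x)/F^i(E_x)$, of dimension $n_i(x)$) the matrix $N$ sends level $i$ strictly into levels $>i$. Consequently, for the holomorphic family $A(w)$ the entry $A(w)_{pq}$ has $w$-adic valuation at least $1$ whenever $\text{level}(p)\le\text{level}(q)$, and at least $0$ otherwise. Expanding $c_j(w)$ as a sum of principal $j\times j$ minors of $A(w)$, and each minor over permutations, the valuation of $c_j$ is bounded below by
$$\min_{|S|=j}\ \min_{\pi\in\mathrm{Sym}(S)}\ \#\{p\in S:\text{level}(p)\le\text{level}(\pi(p))\},$$
the minimum taken over all size-$j$ index sets $S$ and all permutations $\pi$ of $S$.

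The main work, and the step I expect to be the real obstacle, is identifying this combinatorial minimum with $\varepsilon_j(x)$. I would pass to the complementary count and maximize the number of strict level-descents. Decomposing $\pi$ into cycles and using that along one cycle the maximal number of strict descents equals its length minus the largest level-multiplicity occurring in it, the maximum number of descents over all $\pi$, for a fixed choice of levels with multiplicities $(s_i)$, $\sum_i s_i=j$, $0\le s_i\le n_i(x)$, equals $j-\max_i s_i$ (the single-cycle partition being optimal). Hence the sought minimum equals $\min_{(s_i)}\max_i s_i$, the smallest achievable maximal load when distributing $j$ items into bins of capacities $n_i(x)$. Finally, using $\sum_{t\le m}\mu_t(x)=\sum_i\min(n_i(x),m)$, this load equals $\min\{m:\sum_{t\le m}\mu_t(x)\ge j\}$, which is precisely $\varepsilon_j(x)$ by definition. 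This yields $\mathrm{ord}_{w=0}\,c_j\ge\varepsilon_j(x)$, hence the pole bound, so that $\alpha_j\in\text{H}^0\big(\mathbb{P}^1,\omega_{\mathbb{P}^1}^{\otimes j}\otimes\mathcal{O}_{\mathbb{P}^1}(\sum_{x\in I}(j-\varepsilon_j(x))x)\big)$ for every $j$, i.e. $\mathrm{im}(h_P)\subseteq\mathbf{H}_P$. The delicate points to verify carefully are the two extremal combinatorial identities (maximal descents per cycle, and the min–max load formula) together with the claim that strong preservation forces exactly the stated entrywise valuations of $A(w)$.
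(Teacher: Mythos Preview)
The paper does not supply its own proof of this proposition; it is quoted verbatim as \cite{SWW19}, Theorem~3.4, and later the paper only invokes ``similar arguments as in the proof of Theorem~3.4 in \cite{SWW19}'' without reproducing them. So there is no in-paper argument to compare against.

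Your direct argument is correct. The reduction to a local pole-order estimate is exactly right: writing $\phi=A(w)\,dw/w$ near $x$ and using that strong preservation of the flag forces $N=A(0)$ to be strictly block lower-triangular with respect to the level decomposition, you get the entrywise valuation bound $\mathrm{val}\,A(w)_{pq}\ge 1$ whenever $\mathrm{level}(p)\le\mathrm{level}(q)$. The expansion of $c_j(w)=\mathrm{Tr}(\wedge^j A(w))$ as a sum of principal $j\times j$ minors then gives the combinatorial lower bound you state.

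Both combinatorial identities you flag as ``delicate'' do hold. For the first, given any permutation $\pi$ of a $j$-element set $S$ with level multiplicities $(s_i)$, the edges $p\to\pi(p)$ with $\mathrm{level}(p)>\mathrm{level}(\pi(p))$ form a union of directed paths (no directed cycle can have strictly decreasing levels), and each such path meets every level at most once; hence the number of paths, which equals the number of non-descents, is at least $\max_i s_i$. Conversely, partitioning $S$ into $\max_i s_i$ strictly decreasing chains (possible since each level has at most $\max_i s_i$ elements) and letting $\pi$ cycle each chain gives exactly $\max_i s_i$ non-descents; your remark that a single cycle is optimal is slightly imprecise, but the bound is correct. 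For the second, $\sum_{t\le m}\mu_t(x)=\sum_i\min(n_i(x),m)$ is immediate from the definition of $\mu_t$, so the minimal achievable $\max_i s_i$ subject to $\sum s_i=j$, $s_i\le n_i(x)$ is exactly $\min\{m:\sum_{t\le m}\mu_t(x)\ge j\}=\varepsilon_j(x)$. This yields $\mathrm{ord}_{w=0}c_j\ge\varepsilon_j(x)$ and hence the claimed inclusion $\mathrm{im}(h_P)\subseteq\mathbf{H}_P$.
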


Denote by \(|\omega_{\mathbb{P}^{1}}(D_I)^{-1}| = \operatorname{Spec}\bigl(\operatorname{Sym}(\omega_{\mathbb{P}^{1}}(D_I)^{-1})\bigr)\) the total space of the line bundle \(\omega_{\mathbb{P}^{1}}(D_I)^{-1}\).  
For any \(\alpha = (\alpha_i)_{1\le i\le r} \in \mathbf{H}\), the associated spectral curve \(C_{\alpha}\) is constructed as follows.  
Each \(\alpha_i\) induces a homomorphism
\[
\alpha_i:\omega_{\mathbb{P}^{1}}(D_I)^{-r}\longrightarrow\omega_{\mathbb{P}^{1}}(D_I)^{-(r-i)} .
\]
Summing these homomorphisms yields a map
\[
u:\omega_{\mathbb{P}^{1}}(D_I)^{-r}\longrightarrow\operatorname{Sym}\!\bigl(\omega_{\mathbb{P}^{1}}(D_I)^{-1}\bigr).
\]
Let \(\mathscr{J}\) be the ideal generated by the image of \(u\).  The spectral curve attached to \(\alpha\) is defined as
\[
C_{\alpha}= \operatorname{Spec}\!\Bigl(\operatorname{Sym}\!\bigl(\omega_{\mathbb{P}^{1}}(D_I)^{-1}\bigr)\big/ \mathscr{J}\Bigr).
\]

There is a natural projection \(\pi_{\alpha}:C_{\alpha}\to\mathbb{P}^{1}\).  
If a parabolic Higgs bundle \((E,\phi)\) satisfies \(\operatorname{char}(E,\phi)=\alpha\), we also call \(C_{\alpha}\) the spectral curve of \((E,\phi)\).  
Informally, viewing a parabolic Higgs bundle as a family of linear maps parameterized by \(\mathbb{P}^{1}\), the curve \(C_{\alpha}\) parameterizes the corresponding eigenvalues.

In \cite{SWW19}, we investigate the generic fibre of parabolic Hitchin map $h_P$, using the following parabolic BNR correspondence: 

\begin{thm}[\cite{SWW19}, Theorem 4.9]\label{parabolic BNR}
	For a generic point \(\alpha \in \mathbf{H}_P\), assume that the corresponding spectral curve \(C_{\alpha}\) is integral. Then there is a bijective correspondence between the following two sets:
	\begin{itemize}
		\item[(1)] Parabolic Higgs bundles \((E,\phi)\) over \(\mathbb{P}^1\) with \(\operatorname{char}(E,\phi)=\alpha\);
		\item[(2)] Line bundles on the normalization \(\tilde{C}_{\alpha}\) of \(C_{\alpha}\).
	\end{itemize}
\end{thm}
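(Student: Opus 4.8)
The plan is to adapt the classical Beauville--Narasimhan--Ramanan construction to the twisted parabolic setting, the novelty being that the parabolic structure and the strong preservation of the filtration by $\phi$ are exactly what cut out \emph{line bundles on the normalization} among all rank-one torsion-free sheaves on the (singular) integral curve $C_\alpha$. First I would record the spectral geometry: the finite flat projection $\pi_\alpha\colon C_\alpha\to\mathbb{P}^1$ of degree $r$, the tautological eigenvalue $\lambda$, a section of $\pi_\alpha^{*}\omega_{\mathbb{P}^1}(D_I)$ on $C_\alpha$, and the normalization $\nu\colon \tilde C_\alpha\to C_\alpha$ with $\tilde\pi:=\pi_\alpha\circ\nu$ finite. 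Since $\alpha\in\mathbf{H}_P$, the nilpotency of each residue $\mathrm{Res}_x\phi$ is built into the Hitchin base, which forces all points of $C_\alpha$ lying over $x\in I$ onto the zero section; genericity of $\alpha$ is used to guarantee that $C_\alpha$ is smooth away from $I$, so that all singularities of $C_\alpha$ are concentrated over the marked points.

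For the direction from line bundles to parabolic Higgs bundles, given a line bundle $L$ on $\tilde C_\alpha$ I would set $E=\tilde\pi_{*}L$; because $\tilde\pi$ is finite and $\mathbb{P}^1$ is smooth, $E$ is a vector bundle of rank $r$. Multiplication by $\nu^{*}\lambda$ is an $\mathcal{O}_{\tilde C_\alpha}$-linear map $L\to L\otimes\tilde\pi^{*}\omega_{\mathbb{P}^1}(D_I)$, and pushing forward yields a field $\phi\colon E\to E\otimes\omega_{\mathbb{P}^1}(D_I)$ whose characteristic polynomial is $\alpha$ by the defining equation of $C_\alpha$ (Cayley--Hamilton).

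The crux is to produce the parabolic structure at each $x\in I$ and to verify that $\phi$ preserves it strongly. The filtration on the fibre $E_x=(\tilde\pi_{*}L)_x$ is read off from the stratification of $\tilde\pi^{-1}(x)$ by the order of vanishing of $\nu^{*}\lambda$ along the branches of $\tilde C_\alpha$ over $x$; the inclusion $\phi(F^{i}(E_x))\subseteq F^{i+1}((E\otimes\omega_{\mathbb{P}^1}(D_I))_x)$ is exactly the statement that $\lambda$ vanishes to the appropriate order on these branches. I would then match the resulting jump data with the prescribed numbers $\{n_i(x)\}$ of the parabolic type $\Sigma$, using the local branch structure of $\tilde C_\alpha\to\mathbb{P}^1$ over $x$ and the numerology encoded in $\varepsilon_j(x)$ that defines $\mathbf{H}_P$.

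For the reverse direction and bijectivity, given $(E,\phi)$ with $\mathrm{char}(E,\phi)=\alpha$ I would form the cokernel of $\lambda-\pi_\alpha^{*}\phi$ on $C_\alpha$, obtaining a rank-one torsion-free sheaf; the classical argument on an integral curve gives a correspondence between Higgs bundles with characteristic polynomial $\alpha$ and such sheaves. The essential new point is that the strong-preservation condition at the points of $I$ forces this eigensheaf to be $\nu_{*}$ of a genuine line bundle on $\tilde C_\alpha$ rather than a more general torsion-free sheaf, and conversely every line bundle on $\tilde C_\alpha$ arises this way; these two assignments are mutually inverse. I expect the main obstacle to be precisely this local analysis over the marked points: showing that ``$\phi$ nilpotent and strongly filtration-preserving'' is \emph{equivalent} to the eigensheaf descending from the normalization, and that for generic $\alpha$ the local singularity type of $C_\alpha$ over each $x$ is controlled enough that the flag data $\{n_i(x)\}$ corresponds bijectively to the combinatorics of the branches of $\tilde C_\alpha$. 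This is where the genericity hypothesis and the integrality of $C_\alpha$ must be invoked with care.
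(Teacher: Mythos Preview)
The paper does not prove this theorem: it is quoted verbatim as Theorem~4.9 of \cite{SWW19}, and no proof is supplied here. So there is no argument in the present paper to compare your proposal against.

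That said, the remark immediately following the statement (Remark~\ref{rem after par BNR}) gives two useful hints about what the proof in \cite{SWW19} actually does, and your outline is compatible with both. First, the paper notes that the classical BNR correspondence yields torsion-free rank-one sheaves on the integral spectral curve, and that the passage to the \emph{normalization} is forced because the spectral curve is essentially never smooth in the parabolic setting (whenever some $n_i(x)\neq 1$). This matches your identification of the crux: showing that the strong filtration-preservation condition on $\phi$ at the marked points is equivalent to the eigensheaf being $\nu_*$ of a line bundle on $\tilde C_\alpha$ rather than a general rank-one torsion-free sheaf on $C_\alpha$. Second, the remark states that the argument in \cite{SWW19} is ``mostly local computations'' near the parabolic points, which is exactly where you locate the main difficulty. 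Your sketch of reading the filtration from the vanishing orders of $\nu^*\lambda$ along the branches of $\tilde C_\alpha$ over each $x\in I$, and matching the resulting jump data with the combinatorics encoded by $\varepsilon_j(x)$, is the right shape for such a local analysis.

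In short: your plan is plausible and consonant with the hints the paper gives, but since the paper defers the proof entirely to \cite{SWW19}, a genuine comparison would require consulting that reference.
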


\begin{rem}\label{rem after par BNR}
	\quad 
	\begin{itemize}
		\item[(1)] The original BNR correspondence \cite{BNR89} states that for (non‑parabolic) Higgs bundles on a smooth projective curve, there is a bijection between Higgs bundles and torsion‑free rank‑\(1\) sheaves on the associated integral spectral curve (see \cite{BNR89}, Proposition~3.6). When the spectral curve is smooth, ``torsion‑free rank \(1\) sheaves'' is equivalent to ``line bundles''. In the parabolic setting, however, the spectral curves that appear are rarely smooth. Indeed, if some \(n_i(x)\neq 1\), the corresponding spectral curve is singular.
		
		\item[(2)] In \cite{SWW19} the base curve is assumed to have genus at least \(2\). Nevertheless, the arguments are essentially local, so all conclusions remain valid in the genus‑zero case considered here.
	\end{itemize}
\end{rem}

As an immediate consequence we obtain:

\begin{cor}
	For a generic point \(\alpha \in \mathbf{H}_P\) whose spectral curve \(C_{\alpha}\) is integral, the fiber \(h_P^{-1}(\alpha)\) is isomorphic to an open subset of a connected component of the Picard variety of \(\tilde{C}_{\alpha}\). Moreover,
	\[
	\dim h_P^{-1}(\alpha) = \frac{1}{2} \dim \mathbf{Higgs}_P .
	\]
\end{cor}

\section{Quiver varieties and the isomorphism theorem}\label{section 3}

A quiver is a finite oriented graph. Let \(Q = (\operatorname{I}, \operatorname{E})\) be a quiver, where \(\operatorname{I}\) is the set of vertices and \(\operatorname{E}\) is the set of oriented edges. Given a dimension vector \(\mathbf{v} = (v_i)_{i\in \operatorname{I}} \in \mathbb{Z}_{\ge 0}^{\operatorname{I}}\), a representation of \(Q\) of dimension \(\mathbf{v}\) consists of a family of vector spaces \(\{V_i\}_{i\in \operatorname{I}}\) with \(\dim V_i = v_i\) together with linear maps \(\{\phi_{ij}:V_i\to V_j\}_{(i\to j)\in \operatorname{E}}\). All such representations are parametrised by the linear space
\[
\mathbf{R}= \operatorname{Rep}(Q,\mathbf{v}) 
= \bigoplus_{(i\to j)\in \operatorname{E}} \operatorname{Hom}(\mathbb{C}^{v_i},\mathbb{C}^{v_j}),
\]
on which the group \(\operatorname{GL}(\mathbf{v}):=\prod_{i\in \operatorname{I}}\operatorname{GL}(v_i)\) acts by change of basis. The diagonal subgroup \(\Delta:\mathbb{G}_m\hookrightarrow\operatorname{GL}(\mathbf{v})\) acts trivially. Two representations in \(\operatorname{Rep}(Q,\mathbf{v})\) are isomorphic precisely when they lie in the same \(\operatorname{GL}(\mathbf{v})\)-orbit.

One may construct a moduli space of representations of \(Q\) by the quotient \(\operatorname{Rep}(Q,\mathbf{v})/\operatorname{GL}(\mathbf{v})\); however, this quotient is usually non‑Hausdorff. A better behaved alternative is the affine GIT quotient
\[
\mathpzc{R}_0(\mathbf{v}) :=\mathbf{R}/\!/\operatorname{GL}(\mathbf{v})
=\operatorname{Spec} \mathbb{C}[\mathbf{R}]^{\operatorname{GL}(\mathbf{v})},
\]
where \(\mathbb{C}[\mathbf{R}]\) denotes the coordinate ring of \(\mathbf{R}\).  
For this ring we have the following description:

\begin{prop}[\cite{GZ09}, Proposition 2.1.1]\label{trace}
	The ring \(\mathbb{C}[\mathbf{R}]^{\operatorname{GL}(\mathbf{v})}\) is generated by functions of the form
	\[
	\operatorname{Tr}(\rho,-):\mathsf{V}\longmapsto\operatorname{Tr}(\rho,\mathsf{V}),
	\]
	where \(\mathsf{V}\in\mathbf{R}\) is a representation, \(\rho\) is an oriented cycle in \(Q\), and \(\operatorname{Tr}(\rho,\mathsf{V})\) is the trace of the composition of the linear maps in \(\mathsf{V}\) along the cycle \(\rho\).
\end{prop}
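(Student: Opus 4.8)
The plan is to reduce the statement to Procesi's fundamental theorem on invariants of tuples of matrices under simultaneous conjugation, which asserts that $\mathbb{C}[M_n(\mathbb{C})^{\oplus m}]^{GL_n}$ is generated by traces of words (monomials) in the matrices. To set this up, I would put $n=\sum_{i\in \mathrm{I}}v_i$ and $\mathbb{C}^n=\bigoplus_{i\in\mathrm{I}}\mathbb{C}^{v_i}$, and let $e_1,\dots,e_{|\mathrm{I}|}\in M_n(\mathbb{C})$ be the orthogonal idempotents projecting onto the summands. For an edge $a\colon i\to j$ of $Q$, extend $\phi_a$ by zero to an endomorphism $X_a=e_j\,\phi_a\,e_i\in M_n(\mathbb{C})$; this gives a $GL(\mathbf{v})$-equivariant closed embedding $\mathbf{R}\hookrightarrow M_n(\mathbb{C})^{\oplus|\mathrm{E}|}$, where $GL(\mathbf{v})$ is realised as the block-diagonal subgroup of $GL_n$. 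The crucial structural observation is that this subgroup is exactly the centralizer $GL(\mathbf{v})=Z_{GL_n}(e_1,\dots,e_{|\mathrm{I}|})$.

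First I would pass from $GL(\mathbf{v})$-invariants to $GL_n$-invariants. Since the $e_i$ form a complete system of orthogonal idempotents, the $GL_n$-orbit $\mathcal{O}$ of the tuple $(e_1,\dots,e_{|\mathrm{I}|})$ in $M_n(\mathbb{C})^{\oplus|\mathrm{I}|}$ is cut out by the equations $e_i^2=e_i$, $e_ie_j=0$ ($i\neq j$), $\sum_i e_i=\mathrm{Id}$ together with the \emph{closed} conditions $\mathrm{Tr}(e_i)=v_i$, hence $\mathcal{O}$ is closed and affine, with stabilizer $GL(\mathbf{v})$, so $\mathcal{O}\cong GL_n/GL(\mathbf{v})$. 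The standard reduction-to-the-fibre isomorphism $\mathbb{C}[X]^{H}\cong(\mathbb{C}[X]\otimes\mathbb{C}[G/H])^{G}$ (for $X$ an affine $G$-variety and $G/H$ affine) then yields
\[
\mathbb{C}\big[M_n(\mathbb{C})^{\oplus|\mathrm{E}|}\big]^{GL(\mathbf{v})}\;\cong\;\big(\mathbb{C}[M_n(\mathbb{C})^{\oplus|\mathrm{E}|}]\otimes\mathbb{C}[\mathcal{O}]\big)^{GL_n}.
\]
Because $\mathbb{C}[\mathcal{O}]$ is a $GL_n$-equivariant quotient of $\mathbb{C}[M_n(\mathbb{C})^{\oplus|\mathrm{I}|}]$ and $GL_n$ is reductive, the right-hand side is a quotient of $\mathbb{C}[M_n(\mathbb{C})^{\oplus(|\mathrm{E}|+|\mathrm{I}|)}]^{GL_n}$, obtained by specialising the last $|\mathrm{I}|$ matrix variables to the idempotents on $\mathcal{O}$. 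Procesi's theorem applies to this larger ring, so it is generated by traces of words $\mathrm{Tr}(Z_1\cdots Z_k)$ with each $Z_l$ one of the edge matrices $X_a$ or one of the vertex matrices $e_i$. Restricting along the closed embedding $\mathbf{R}\hookrightarrow M_n(\mathbb{C})^{\oplus|\mathrm{E}|}$ then gives a surjection onto $\mathbb{C}[\mathbf{R}]^{GL(\mathbf{v})}$ via the Reynolds operator, so these traces already generate the ring in question.

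It then remains to identify these traces of words with traces along oriented cycles. Since $X_a=e_j X_a e_i$ for $a\colon i\to j$ and $e_ie_j=\delta_{ij}e_i$, a product $Z_1\cdots Z_k$ is nonzero only when the edges occurring compose head-to-tail into an oriented path in $Q$, and its trace vanishes unless that path closes up into an oriented cycle $\rho$; the interspersed idempotents $e_i$ merely enforce this block bookkeeping and are absorbed. Hence every generator restricts to a scalar multiple of some $\mathrm{Tr}(\rho,-)$, which is the desired conclusion. The hard part will be the passage from the subgroup $GL(\mathbf{v})$ to the full $GL_n$, namely verifying that the idempotent orbit $\mathcal{O}$ is closed and that the reduction-to-the-fibre isomorphism applies, since this is precisely the step converting a product-group invariant problem into the single-group framework where Procesi's theorem is available. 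Once that reduction is secured, Procesi's theorem and the elementary block-structure computation complete the argument.
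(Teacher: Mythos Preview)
The paper does not supply a proof of this proposition: it is stated with a citation to \cite{GZ09}, Proposition~2.1.1, and used as a black box (its only application is the remark that $\mathpzc{R}_0(\mathbf{v})$ is a point when $Q$ has no oriented cycles). So there is no ``paper's own proof'' to compare against.

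Your proposal is a correct outline of the classical Le~Bruyn--Procesi argument. The key steps --- realising $GL(\mathbf{v})$ as the centraliser in $GL_n$ of the vertex idempotents, observing that the $GL_n$-orbit $\mathcal{O}$ of the idempotent tuple is closed (complete orthogonal idempotents with prescribed ranks form a single closed conjugacy class), invoking the fibre-reduction isomorphism $\mathbb{C}[X]^{H}\cong(\mathbb{C}[X]\otimes\mathbb{C}[G/H])^{G}$, and then applying Procesi's theorem to $M_n(\mathbb{C})^{\oplus(|\mathrm{E}|+|\mathrm{I}|)}$ --- are all sound. The final simplification, that on $\mathbf{R}$ a trace of a word in the $X_a$'s and $e_i$'s either vanishes or equals the trace along an oriented cycle, is exactly the block bookkeeping forced by $X_a=e_jX_ae_i$ and $e_ie_j=\delta_{ij}e_i$. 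One small clarification: the passage from $\mathbb{C}[M_n^{\oplus|\mathrm{E}|}]^{GL(\mathbf{v})}$ to $\mathbb{C}[\mathbf{R}]^{GL(\mathbf{v})}$ is a surjection simply because $\mathbf{R}$ is a $GL(\mathbf{v})$-stable linear subspace and $GL(\mathbf{v})$ is reductive; invoking the Reynolds operator is fine but not strictly necessary to phrase it.
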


Consequently, if \(Q\) contains no oriented cycles, then \(\mathpzc{R}_0(\mathbf{v})\) is a single point.  
To distinguish more orbits, King introduced the following construction (cf.~\cite{AK94}):

\begin{defn}\label{definition}
	Given a character \(\chi:\operatorname{GL}(\mathbf{v})\rightarrow \mathbb{G}_m\), the GIT quotient of \(\mathbf{R}\) by \(\operatorname{GL}(\mathbf{v})\) with respect to \(\chi\) is
	\[
	\mathpzc{R}_{\chi}(\mathbf{v})=\mathbf{R}/\!/_{\chi}\operatorname{GL}(\mathbf{v})
	=\operatorname{Proj}\Bigl(\bigoplus_{n\geq 0}\mathbb{C}[\mathbf{R}]^{\operatorname{GL}(\mathbf{v}),\,\chi^{n}}\Bigr),
	\]
	where
	\[
	\mathbb{C}[\mathbf{R}]^{\operatorname{GL}(\mathbf{v}),\,\chi^{n}}
	=\bigl\{f\in\mathbb{C}[\mathbf{R}]\;\big|\;
	f(g^{-1}x)=\chi(g)^{n}f(x)\text{ for all }g\in\operatorname{GL}(\mathbf{v}),\,x\in\mathbf{R}\bigr\}.
	\]
	The natural morphism \(\mathpzc{R}_{\chi}(\mathbf{v})\rightarrow \mathpzc{R}_0(\mathbf{v})\) is projective.
\end{defn}

King also analyzed stability conditions in this setting:

\begin{lem}[\cite{AK94}, Proposition 2.5]\label{criterion}
	Let a linear algebraic group \(G\) act linearly on a vector space \(\mathbf{R}\), and assume the kernel of the action is the central subgroup \(\Delta\).  Let \(\chi:G\rightarrow \mathbb{G}_m\) be a character.
	
	A point \(x\in\mathbf{R}\) is \(\chi\)-semistable if and only if \(\chi(\Delta)=\{1\}\) and for every one‑parameter subgroup \(\lambda:\mathbb{G}_m\to G\) such that \(\lim_{t\to0}\lambda(t)x\) exists, one has \(\lambda\circ\chi(t)=t^{a}\) with \(a\ge0\).
	
	The point \(x\) is stable if and only if the only one‑parameter subgroup \(\lambda\) of \(G\) for which \(\lim_{t\to0}\lambda(t)x\) exists and \(\lambda\circ\chi(t)=1\) is contained in \(\Delta\).
\end{lem}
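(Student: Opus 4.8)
The plan is to deduce this statement from the Hilbert--Mumford numerical criterion by realizing $\chi$-(semi)stability as Mumford (semi)stability for a suitable linearization. First I would lift the $G$-action on $\mathbf{R}$ to the trivial line bundle $L=\mathbf{R}\times\mathbb{C}$ through the character, setting $g\cdot(x,v)=(g\cdot x,\chi(g)v)$. With this linearization the $G$-invariant global sections of $L^{\otimes n}$ are precisely the functions in $\mathbb{C}[\mathbf{R}]^{G,\chi^n}$, so that $\mathbf{R}//_\chi G=\text{Proj}\big(\bigoplus_{n\geq 0}\mathbb{C}[\mathbf{R}]^{G,\chi^n}\big)$ is exactly the GIT quotient attached to $L$, and $\chi$-(semi)stability coincides with Mumford (semi)stability for $L$. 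Equivalently, one may pass to $\mathbf{R}\times\mathbb{A}^1$ with the twisted action $g\cdot(x,z)=(g\cdot x,\chi(g)^{-1}z)$ and phrase everything in terms of the affine quotient and the orbit closure of the lifted point $(x,1)$; the two formulations are interchangeable, and I would use whichever is more convenient at each point.

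Second, I would record the role of $\Delta$. Since $\Delta$ is the kernel of the action, it fixes every point of $\mathbf{R}$ but acts on the fibres of $L$ through $\chi|_\Delta$, so a nonzero $\chi^n$-semi-invariant can exist only when $\chi(\Delta)=\{1\}$. This both explains and forces the hypothesis $\chi(\Delta)=\{1\}$ appearing in the semistable criterion, and shows that without it the semistable locus is empty.

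Third, the heart of the argument is the weight computation. Applying Hilbert--Mumford to $L$, a point $x$ is semistable iff $\mu^{L}(x,\lambda)\geq 0$ for every one-parameter subgroup $\lambda\colon\mathbb{G}_m\to G$, and stable iff $\mu^{L}(x,\lambda)>0$ for every $\lambda$ not contained in $\Delta$. In the affine setting the weight $\mu^{L}(x,\lambda)$ is governed by the limit point $x_0=\lim_{t\to 0}\lambda(t)\cdot x$: when this limit fails to exist the corresponding inequality is automatically satisfied and imposes no constraint, which is exactly why the statement quantifies only over those $\lambda$ for which $\lim_{t\to 0}\lambda(t)\cdot x$ exists. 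When $x_0$ does exist it is $\lambda$-fixed, and $\mu^{L}(x,\lambda)$ equals, up to the sign fixed by the semi-invariance convention, the weight $a$ with which $\lambda$ acts on the fibre $L_{x_0}$, i.e. the integer $a$ determined by $\chi(\lambda(t))=t^{a}$. Thus $\mu^{L}(x,\lambda)\geq 0$ translates into $a\geq 0$ and $\mu^{L}(x,\lambda)>0$ into $a>0$, and feeding this back into the numerical criterion yields the two displayed equivalences: semistability becomes $\chi(\Delta)=\{1\}$ together with $a\geq 0$ for all $\lambda$ admitting a limit, and stability becomes the requirement that the only such $\lambda$ with $a=0$ lie in $\Delta$.

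The step I expect to be the main obstacle is the careful bookkeeping in this affine (rather than projective) incarnation of Hilbert--Mumford. One must justify that one-parameter subgroups without a limit contribute no condition, verify that the weight of $\lambda$ on $L_{x_0}$ carries the sign matching the stated conclusion, and make sure that the reductivity of $G$ (here $G=GL(\mathbf{v})$) is invoked exactly where one passes between the existence of a non-vanishing semi-invariant and the orbit-closure/numerical formulation. Once these conventions are pinned down, the translation carried out in the previous step is purely mechanical.
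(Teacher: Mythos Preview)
The paper does not give its own proof of this lemma; it is quoted directly from King \cite{AK94}, Proposition 2.5, and used as a black box. Your proposal is essentially King's original argument---lifting to the trivial line bundle twisted by $\chi$, identifying $\chi$-semi-invariants with invariant sections, and reading off the Hilbert--Mumford weight as the exponent of $\chi\circ\lambda$---so there is nothing to compare against in this paper, and your outline is correct.
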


In what follows we focus on the ``star‑shaped'' quiver shown in Figure~\ref{Figure1} (the vertices are already labelled by the corresponding components of the dimension vector).

\begin{figure}[htbp]
	\[
	\begin{tikzcd}
		& \gamma_1^1 \arrow[ddl] & \gamma_2^1 \arrow[l] & \cdots \arrow[l] & \gamma_{\sigma_1}^1 \arrow[l]\\
		& \gamma_1^2 \arrow[dl] & \gamma_2^2 \arrow[l] & \cdots \arrow[l] & \gamma_{\sigma_2}^2 \arrow[l]\\
		r & \cdots & \cdots & \cdots & \cdots \\
		& \gamma_1^n \arrow[ul] & \gamma_2^n \arrow[l] & \cdots \arrow[l] & \gamma_{\sigma_n}^n \arrow[l]
	\end{tikzcd}
	\]
	\caption{The star‑shaped quiver \(Q\)}
	\label{Figure1}
\end{figure}

We now give an explicit description of \(\mathpzc{R}_{\chi}(\mathbf{v})\) for the star‑shaped quiver. We begin with a lemma that analyses semistable points in a representation space with respect to a product of general linear groups.

\begin{lem}\label{A_n}
	Let \(G=\operatorname{GL}(\gamma_1)\times\cdots\times\operatorname{GL}(\gamma_{\sigma})\) act on the space
	\[
	\mathbf{R}_A = \operatorname{Hom}(\mathbb{C}^{\gamma_{1}},\mathbb{C}^{r})
	\oplus \operatorname{Hom}(\mathbb{C}^{\gamma_{2}},\mathbb{C}^{\gamma_{1}})
	\oplus \cdots
	\oplus \operatorname{Hom}(\mathbb{C}^{\gamma_{\sigma}},\mathbb{C}^{\gamma_{\sigma-1}})
	\]
	and let \(\chi:G\to\mathbb{G}_m\) be the character \((g_i)\mapsto \prod (\det g_i)^{a_i}\) with \(a_i>0\).  
	A point \(f=(f_i)_{1\le i\le\sigma}\) is semistable if and only if each linear map \(f_i\) has maximal rank, i.e. \(\operatorname{rk} f_i = \gamma_i\). Moreover, semistablity is equivalent to stability in this case.
	Consequently, \(G\) acts freely on the semistable locus \(\mathbf{R}_A^{ss}\), and the GIT quotient is isomorphic to the partial flag variety:
	\[
	\mathbf{R}_A/\!/_{\chi}G \cong \operatorname{Flag}(\mathbb{C}^{r},\overrightarrow{\gamma}),
	\]
	where \(\overrightarrow{\gamma}=(\gamma_1,\dots,\gamma_{\sigma-1})\).
\end{lem}

\begin{proof}
	Assume \(f=(f_1,\dots,f_{\sigma})\in\mathbf{R}_A^{ss}\) and, for instance, that \(f_{\sigma}\) does not have full rank.  
	Choose bases so that the matrix of \(f_{\sigma}\) has a zero column, say its \(k\)-th column is zero.  
	Define a one‑parameter subgroup \(\lambda:\mathbb{G}_m\to G\) by
	\[
	\lambda(t)=(I,\dots,I,D), \qquad 
	D=\operatorname{diag}(t^{m_1},\dots,t^{m_{\gamma_{\sigma}}}),
	\]
	where \(m_j=0\) for \(j\neq k\) and \(m_k=-1\).  
	Then \(\lim_{t\to0}\lambda(t)f = f\) exists, but \(\lambda\circ\chi(t)=t^{-a_{\sigma}}\), which is negative.  
	By Lemma~\ref{criterion} this contradicts \(\chi\)-semistability. Hence every \(f_i\) must have rank \(\gamma_i\).
	
	Conversely, suppose each \(f_i\) has rank \(\gamma_i\). To prove stability, let \(\lambda\) be a one‑parameter subgroup of \(G\) such that \(\lim_{t\to0}\lambda(t)f\) exists.  
	In suitable coordinates we may write \(\lambda(t)=(D_1(t),\dots,D_{\sigma}(t))\) with each \(D_i(t)\) diagonal.  
	Because \(f_{\sigma}\) is surjective, every column of its matrix contains a non‑zero entry; the existence of the limit forces each diagonal entry of \(D_{\sigma}(t)\) to be a non‑negative power of \(t\).  
	The same argument applies to the other maps \(f_i\).  
	Since \(\chi\) has positive exponents, the condition \(\lambda\circ\chi(t)=1\) forces every diagonal entry of each \(D_i(t)\) to be \(t^0\), i.e. \(\lambda\) is contained in the trivial one‑parameter subgroup, which lies in the kernel of the action.  
	Lemma~\ref{criterion} therefore implies that \(f\) is stable.
	
	The free action of \(G\) on the stable locus is clear, and the identification of the quotient with the partial flag variety follows from the standard description of flag varieties as GIT quotients of tuples of full‑rank linear maps.
\end{proof}

\begin{lem}\label{two quotient}
	Let \(G = G_1 \times G_2\) act linearly on a vector space \(\mathbf{R}\), and let
	\(\chi : G \rightarrow \mathbb{G}_m\) be a character that factors as the product of characters
	\(\chi_1\) of \(G_1\) and \(\chi_2\) of \(G_2\).  Then
	\[
	\mathbf{R}/\!/_{\chi}G \;\cong\; \bigl(\mathbf{R}/\!/_{\chi_1}G_1\bigr)/\!/_{\chi_2}G_2.
	\]
\end{lem}

\begin{proof}
	The equality \(\mathbb{C}[\mathbf{R}]^{G,\,\chi^{n}} = 
	\bigl(\mathbb{C}[\mathbf{R}]^{G_1,\,\chi_1^{n}}\bigr)^{G_2,\,\chi_2^{n}}\) follows directly from the definitions.
\end{proof}

We now establish an isomorphism between the quiver variety of the star‑shaped quiver and the moduli space of semistable parabolic bundles on \(\mathbb{P}^1\) constructed in Section~\ref{section 2}.  
Recall the finite set \(I=\{x_1,\dots,x_n\}\) and the parabolic type \(\Sigma\).  
Consider the quiver \(Q\) depicted in Figure~\ref{Figure1} and choose a dimension vector \(\mathbf{v}\) for \(Q\) as indicated in the same figure.  
Set \(\gamma_i^j = \gamma_i(x_j)\) and \(\sigma_j = \sigma_{x_j}-1\), where the numbers \(\gamma_i(x_j)\) are those appearing in the construction of \(\mathbf{M}_P\) (see Section~\ref{section 2}).  

Define a character \(\chi:\operatorname{GL}(\mathbf{v})\longrightarrow \mathbb{G}_m\) by
\[
\chi\bigl(g_0,(g_i^j)\bigr)= (\det g_0)^{-N}\prod_{j=1}^{n}\prod_{i=1}^{\sigma_j}(\det g_i^j)^{d_i^j},
\]
where \(g_0\in\operatorname{GL}(r)\), \(g_i^j\in\operatorname{GL}(\gamma_i^j)\), and  
\(d_i^j = d_i(x_j)=a_{i+1}(x_j)-a_i(x_j)\).  
If necessary we replace \(\chi\) by a positive multiple so that
\[
N = \frac{1}{r}\sum_{j=1}^{n}\sum_{i=1}^{\sigma_j}\gamma_i^j d_i^j
\]
becomes an integer.  This choice guarantees that \(\chi(\Delta)=1\), i.e. the character vanishes on the central one‑parameter subgroup.

\begin{thm}\label{main1}
	Assume Condition~\ref{Condition} holds, and let \(\mathbf{v}\) and \(\chi\) be chosen as above.  
	Then the moduli space \(\mathbf{M}_P\) of rank \(r\), degree \(0\) semistable parabolic bundles of type \(\Sigma\) on \(\mathbb{P}^1\) is isomorphic to the quiver variety \(\mathpzc{R}_{\chi}(\mathbf{v})\).
\end{thm}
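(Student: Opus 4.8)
The plan is to present $\mathpzc{R}_{\chi}(\mathbf{v})$ as an iterated GIT quotient, peeling off the group $GL(\mathbf{v})=GL(r)\times H$ in two stages, where $H=\prod_{j,i}GL(\gamma_i^j)$ is the product of all the arm groups, and then to identify the outcome with the presentation $\mathbf{M}_P\cong\mathbf{F}//SL(V)$ obtained in Section \ref{section 2}. Write $\chi=\chi_0\cdot\chi_H$, where $\chi_0(g_0)=(\det g_0)^{-N}$ is a character of $GL(r)$ and $\chi_H\big((g_i^j)\big)=\prod(\det g_i^j)^{d_i^j}$ is a character of $H$. Since the representation space splits into the $n$ arms as $\mathbf{R}=\bigoplus_{j=1}^n\mathbf{R}_{A,j}$, and $\chi$ is a product of the two characters on the commuting factors $H$ and $GL(r)$, Lemma \ref{two quotient} gives $\mathpzc{R}_{\chi}(\mathbf{v})\cong\big(\mathbf{R}//_{\chi_H}H\big)//_{\chi_0}GL(r)$.

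First I would compute the inner quotient. The factor $H_j=\prod_iGL(\gamma_i^j)$ attached to the $j$-th arm acts only on $\mathbf{R}_{A,j}$ and fixes the central space $\mathbb{C}^r$, so the action on arm $j$ is exactly the one treated in Lemma \ref{A_n}, with positive weights $a_i=d_i^j>0$ arising from the strict inequalities among the parabolic weights. Hence $\mathbf{R}_{A,j}//_{\chi_{H_j}}H_j\cong\text{Flag}(\mathbb{C}^r,\overrightarrow{\gamma}(x_j))$ (a geometric quotient, since semistable equals stable and the action is free there), and this isomorphism is $GL(r)$-equivariant with descended ample line bundle the Pl\"ucker-type bundle on the flag variety determined by the weights $(d_1^j,\dots,d_{\sigma_j}^j)$. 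Applying Lemma \ref{two quotient} repeatedly across the $n$ arms, and using that $H_j$ acts only on the $j$-th summand, I obtain $\mathbf{R}//_{\chi_H}H\cong\prod_{j=1}^n\text{Flag}(\mathbb{C}^r,\overrightarrow{\gamma}(x_j))=\mathbf{F}$, carrying precisely the ample linearization $L_H$ corresponding to the polarization $\prod_{x\in I}(d_1(x),\dots,d_{\sigma_x-1}(x))$ used to construct $\mathbf{M}_P$.

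It then remains to treat the outer quotient $\mathbf{F}//_{\chi_0}GL(r)$, where I identify $GL(r)$ with $GL(V)$ through $V=\text{H}^0(\mathbb{P}^1,\mathcal{O}_{\mathbb{P}^1}^{\oplus r})$. The center $\mathbb{G}_m\subset GL(r)$ of scalars acts trivially on flags, hence trivially on $\mathbf{F}$; and the exponent $N=\big(\sum\gamma_i^jd_i^j\big)/r$ was chosen exactly so that $\chi\circ\Delta=1$, which is equivalent to this central $\mathbb{G}_m$ acting trivially on the twisted linearization $L_H\otimes\chi_0$ over $\mathbf{F}$. Since $\chi_0$ restricts trivially to $SL(V)$ and $GL(r)=SL(V)\cdot\mathbb{G}_m$ with the central factor acting trivially on both $\mathbf{F}$ and the linearization, the $GL(r)$-semistable locus coincides with the $SL(V)$-semistable locus, and $\mathbf{F}//_{\chi_0}GL(r)\cong\mathbf{F}//SL(V)$. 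By the explicit construction of $\mathbf{M}_P$ in Section \ref{section 2}, the latter is $\mathbf{M}_P$, which closes the chain of isomorphisms.

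The main obstacle I expect is the bookkeeping for the descended linearization together with the matching of the two stability notions. Concretely, one must verify that the line bundle descended from $\chi_{H_j}$ under Lemma \ref{A_n} is exactly the flag-variety polarization with weights $d_i^j$, so that the aggregate over all arms and all points of $I$ reproduces the Section \ref{section 2} polarization; and one must check that the central weight computation making $\mathbb{G}_m$ act trivially on $L_H\otimes\chi_0$ is precisely the identity $Nr=\sum\gamma_i^jd_i^j$. Equivalently, one can phrase the whole comparison at the level of King's numerical criterion (Lemma \ref{criterion}): the one-parameter subgroups of $GL(\mathbf{v})$ must, after the inner quotient, translate into the Hilbert--Mumford inequality for $SL(V)$ acting on $\mathbf{F}$ recorded in Section \ref{section 2}. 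This translation of semistable loci, together with the clean factorization of the center via the choice of $N$, is the step that deserves the most careful checking.
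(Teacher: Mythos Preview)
Your proposal is correct and follows essentially the same route as the paper: split $GL(\mathbf{v})=GL(r)\times H$, apply Lemma~\ref{two quotient} to reduce to an iterated quotient, use Lemma~\ref{A_n} arm by arm to identify the inner quotient with $\mathbf{F}$, and then invoke the Section~\ref{section 2} description of $\mathbf{M}_P$. The paper's proof is terser and writes the outer quotient directly as $\mathbf{F}//GL(r)$, whereas you take the extra care to explain why this agrees with the $SL(V)$-quotient appearing in Section~\ref{section 2} (via the choice of $N$ making the center act trivially on the linearization) and why the descended polarization matches; these are exactly the points the paper leaves implicit, so your ``main obstacle'' paragraph is well placed.
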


\begin{proof}
	Fix an isomorphism \(V\cong\mathbb{C}^{r}\).  Write
	\(G_1=\operatorname{GL}(r)\) and \(G_2=\operatorname{GL}(\mathbf{v})/\operatorname{GL}(r)\).  
	By Lemma~\ref{A_n}, the GIT quotient \(\mathbf{R}/\!/_{\chi_2}G_2\) is isomorphic to the product of flag varieties \(\mathbf{F}\) introduced in Section~\ref{section 2}.  
	Applying Lemma~\ref{two quotient}, we obtain
	\[
	\mathbf{R}/\!/_{\chi}G \;\cong\; \mathbf{F}/\!/\operatorname{GL}(r),
	\]
	where the polarization on the right‑hand side is the one described earlier.  
	By Proposition~\ref{moduli of bundle}, the latter quotient is precisely \(\mathbf{M}_P\) under Condition~\ref{Condition}.
\end{proof}

\begin{rem}
	Definition~\ref{definition} shows that replacing the character \(\chi\) by a positive multiple \(n\chi\) yields the same GIT quotient.  By contrast, Example~\ref{key example} and Remark~\ref{key remark} illustrate that the weights of a parabolic bundle generally cannot be scaled without altering the moduli space.  This distinction explains the necessity of imposing Condition~\ref{Condition} in our setting.
\end{rem}

We now show how the moduli space of homologically trivial semistable parabolic Higgs bundles on \(\mathbb{P}^1\) can also be realized as a quiver variety.  

First we recall the notion of the doubled quiver.  For a quiver \(Q=(\operatorname{I},\operatorname{E})\), its double \(\overline{Q}\) is defined as \(\overline{Q}=(\operatorname{I},\,\operatorname{E}\cup \operatorname{E}^{\mathrm{op}})\), where \(\operatorname{E}^{\mathrm{op}}\) consists of an arrow \(j\to i\) for every arrow \(i\to j\) in \(\operatorname{E}\).  The doubled quiver associated with the star‑shaped quiver of Figure~\ref{Figure1} is displayed in Figure~\ref{Figure2}.

\begin{figure}[htbp]
	\[
	\begin{tikzcd}
		& \gamma_1^1 \arrow[ddl] \arrow[r, shift left=.7ex] 
		& \gamma_2^1 \arrow[l] \arrow[r, shift left=.7ex] 
		& \cdots \arrow[l] \arrow[r, shift left=.7ex] 
		& \gamma_{\sigma_1}^1 \arrow[l]\\
		& \gamma_1^2 \arrow[dl] \arrow[r, shift left=.7ex] 
		& \gamma_2^2 \arrow[l] \arrow[r, shift left=.7ex] 
		& \cdots \arrow[l] \arrow[r, shift left=.7ex] 
		& \gamma_{\sigma_2}^2 \arrow[l]\\
		r \arrow[uur, shift left=.7ex] \arrow[ur, shift left=.7ex] \arrow[dr, shift left=.7ex] 
		& \cdots & \cdots & \cdots & \cdots \\
		& \gamma_1^n \arrow[ul] \arrow[r, shift left=.7ex]
		& \gamma_2^n \arrow[l] \arrow[r, shift left=.7ex] 
		& \cdots \arrow[l] \arrow[r, shift left=.7ex] 
		& \gamma_{\sigma_n}^n \arrow[l]
	\end{tikzcd}
	\]
	\caption{The doubled quiver \(\overline{Q}\)}
	\label{Figure2}
\end{figure}

The representation space \(\operatorname{Rep}(\overline{Q},\mathbf{v})\) of \(\overline{Q}\) is canonically identified with the cotangent bundle of \(\mathbf{R}\); consequently we obtain a moment map
\[
\mu: \operatorname{Rep}(\overline{Q},\mathbf{v})=T^{*}\mathbf{R}\longrightarrow \mathfrak{g}_{\mathbf{v}},
\]
defined by \((f_{ij},g_{ji})\mapsto \sum (f_{ij}\circ g_{ji}-g_{ji}\circ f_{ij})\), where \(\mathfrak{g}_{\mathbf{v}}\cong\mathfrak{g}_{\mathbf{v}}^{*}\) is the Lie algebra of \(\operatorname{GL}(\mathbf{v})\). The variety of interest is
\[
\mathfrak{M}_{\chi}(\mathbf{v})=\mu^{-1}(0)/\!/_{\chi}\operatorname{GL}(\mathbf{v}).
\]

We now give an explicit description of \(\mathfrak{M}_{\chi}(\mathbf{v})\) when \(\overline{Q}\) and \(\mathbf{v}\) are as in Figure~\ref{Figure2}. First we analyse the action of \(G_{2}= \operatorname{GL}(\mathbf{v})/\operatorname{GL}(r)\) on \(\mu^{-1}(0)\).

\begin{lem}\label{A_n^*}
	Let \(G\), \(\mathbf{R}_{A}\), \(\chi\) and \(\overrightarrow{\gamma}\) be as in Lemma~\ref{A_n}.  
	Consider the moment map \(\tilde{\mu}:T^{*}\mathbf{R}_{A}\to\mathfrak{g}\) and the induced action of \(G\) on \(\tilde{\mu}^{-1}(0)\). Then
	\[
	\tilde{\mu}^{-1}(0)/\!/_{\chi}G \;\cong\; T^{*}\!\operatorname{Flag}(\mathbb{C}^{r},\overrightarrow{\gamma}).
	\]
\end{lem}

\begin{proof}
	A proof can be found in \cite{KA16}, Theorem~10.43.
\end{proof}

Decompose \(\operatorname{GL}(\mathbf{v})\) as \(G_{1}\times G_{2}\) with \(G_{1}=\operatorname{GL}(r)\). The moment map \(\mu\) then decomposes accordingly, giving a commutative diagram
\begin{equation*}
	\xymatrix{
		\text{T}^*\mathbf{R} \ar[rd]^{\mu} \ar@{->}@/_{1.0pc}/ [rdd]_{\mu_1} \ar@{->}@/^{1.0pc}/ [rrd]^{\mu_2} & & \\
		& \mathfrak{g}_{\mathbf{v}}=\mathfrak{g}_1\oplus \mathfrak{g}_2 \ar[d] \ar[r] & \mathfrak{g}_2\\
		& \mathfrak{g}_2 & }
\end{equation*}
where the vertical and horizontal maps are the natural projections.

Again using the isomorphism \(V\cong\mathbb{C}^{r}\) fixed in Theorem~\ref{main1}, Lemma~\ref{A_n^*} implies that
\[
\mu_{2}^{-1}(0)/\!/_{\chi_{2}}G_{2}\cong T^{*}\mathbf{F}.
\]
The map \(\mu_{1}:\mu_{2}^{-1}(0)\to\mathfrak{g}_{1}\) is \(G_{2}\)-invariant, hence it descends to a morphism
\[
\mu_{Q}: \mu_{2}^{-1}(0)/\!/_{\chi_{2}}G_{2}\longrightarrow \mathfrak{g}_{1}.
\]

\begin{thm}\label{main2}
	Under the same hypotheses as in Theorem~\ref{main1}, there is an isomorphism
	\[
	\Psi:\mathbf{Higgs}^{\circ}_P \longrightarrow \mathfrak{M}_{\chi}(\mathbf{v}),
	\]
	where \(\mathbf{Higgs}^{\circ}_P\) denotes the moduli space of rank \(r\), degree \(0\)
	homologically trivial parabolic Higgs bundles of type \(\Sigma\) on \(\mathbb{P}^1\).
\end{thm}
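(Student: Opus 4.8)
The plan is to mirror the proof of Theorem \ref{main1}, but now at the level of the doubled quiver and its moment map, reducing the quotient $\mathfrak{M}_{\chi}(\mathbf{v}) = \mu^{-1}(0)//_{\chi}GL(\mathbf{v})$ in two stages. Writing $GL(\mathbf{v}) = G_1 \times G_2$ with $G_1 = GL(r)$ the central factor and $G_2 = GL(\mathbf{v})/GL(r)$ the product of the leg groups, and $\chi = \chi_1\chi_2$ accordingly, I would first apply Lemma \ref{two quotient}, whose proof works verbatim for the affine $GL(\mathbf{v})$-variety $\mu^{-1}(0)$ and not only for a linear space, to obtain
$$\mathfrak{M}_{\chi}(\mathbf{v}) \cong \big(\mu^{-1}(0)//_{\chi_2}G_2\big)//_{\chi_1}G_1.$$

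Next I would analyze the inner quotient by $G_2$. Since $\mu = (\mu_1,\mu_2)$ and $\mu_1$ is $G_2$-invariant (the two factors commute and $G_2$ acts trivially on $\mathfrak{g}_1$), the locus $\mu^{-1}(0) = \mu_1^{-1}(0)\cap\mu_2^{-1}(0)$ descends to $\mu_Q^{-1}(0)$ inside $\mu_2^{-1}(0)//_{\chi_2}G_2$, where $\mu_Q$ is the descent of $\mu_1$ recorded in the commutative diagram preceding the theorem. Applying Lemma \ref{A_n^*} leg by leg, each leg being an $A$-type chain attached to the fixed space $V\cong\mathbb{C}^{\oplus r}$, identifies $\mu_2^{-1}(0)//_{\chi_2}G_2 \cong \text{T}^*\mathbf{F}$; here $\chi_2$-semistability forces every leg map to have full rank, exactly as in Lemma \ref{A_n}, so that the reduced points are genuine flags and the reduction of the cotangent directions supplies the fibres of $\text{T}^*\mathbf{F}$.

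The crux is to identify the descended moment map $\mu_Q : \text{T}^*\mathbf{F} \to \mathfrak{g}_1 = \mathfrak{gl}_r$ with the map $\mu_P : \text{T}^*\mathbf{F} \to \mathscr{H}om(V,V)$ of Proposition \ref{HiggsP}. Unwinding Lemma \ref{A_n^*}, at the central vertex the component $\mu_1$ is the sum $\sum_{x\in I} a_x b_x$ of the ``down-then-up'' compositions along the edge joining the centre to the first vertex of the $x$-th leg; writing $A_x = a_x b_x$, the condition $\mu_1 = 0$ is precisely $\sum_{x\in I}A_x = 0$, while the leg moment-map equations $\mu_2 = 0$ force each $A_x$ to preserve the flag filtration strongly. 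By Remark \ref{sum zero} this is exactly the data of a parabolic Higgs field on the homologically trivial bundle $\mathcal{O}_{\mathbb{P}^1}^{\oplus r}$, so $\mu_Q$ matches $\mu_P$ and
$$\mu^{-1}(0)//_{\chi_2}G_2 \cong \mu_Q^{-1}(0) = \mu_P^{-1}(0) = \mathfrak{F}.$$
I expect this identification of the two moment maps, together with checking that the strong nilpotency condition drops out of the leg relations, to be the main obstacle.

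Finally I would carry out the outer quotient, obtaining $\mathfrak{M}_{\chi}(\mathbf{v}) \cong \mathfrak{F}//_{\chi_1}GL(r)$, and match this with $\mathbf{Higgs}_P^{\circ} = \mathfrak{F}//SL(V)$. Since the scalars of $GL(r)$ act trivially on $\mathbf{F}$, and the integer $N$ was chosen so that $\chi\circ\Delta = 1$, the $GL(r)$-quotient with character $(\text{det})^{-N}$ reduces to the $SL(V)$-quotient with the polarization fixed in Section \ref{section 2}, exactly as in the proof of Theorem \ref{main1}. The remaining point is the stability comparison: using Lemma \ref{criterion} and the Hilbert--Mumford computation of Section \ref{section 2}, now applied to sub-representations versus sub-Higgs bundles, one checks that $\chi$-semistability of a point of $\mu^{-1}(0)$ coincides with semistability of the corresponding parabolic Higgs bundle. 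Assembling the three isomorphisms then yields the desired isomorphism $\Psi : \mathbf{Higgs}_P^{\circ} \cong \mathfrak{M}_{\chi}(\mathbf{v})$.
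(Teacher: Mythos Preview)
Your proposal is correct and follows essentially the same route as the paper: split $GL(\mathbf{v})=G_1\times G_2$, use Lemma~\ref{A_n^*} to identify $\mu_2^{-1}(0)//_{\chi_2}G_2\cong \text{T}^*\mathbf{F}$, match the descended moment map $\mu_Q$ with $\mu_P$ via the commutative square, note $\mu^{-1}(0)=\mu_1^{-1}(0)\cap\mu_2^{-1}(0)$, and then finish by the outer $G_1$-quotient exactly as in Theorem~\ref{main1}. The paper's own proof is much terser---it simply records the commutative diagram and says ``the theorem follows as in the proof of Theorem~\ref{main1}''---so your expansion of the moment-map identification and the stability comparison is more detailed than what appears there, but not a different argument.
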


\begin{proof}
	Fix the isomorphism \(V\cong\mathbb{C}^{r}\).  From the construction in Section~\ref{section 2}
	we have the morphism \(\mu_P:T^{*}\mathbf{F}\to\mathscr{H}\!om(V,V)\) introduced before
	Proposition~\ref{HiggsP}.  By Lemma~\ref{A_n^*} there is an identification
	\(T^{*}\mathbf{F}\cong\mu_{2}^{-1}(0)/\!/_{\chi_{2}}G_{2}\).  Moreover, under the natural
	identifications \(\mathscr{H}\!om(V,V)\cong\mathfrak{g}_{1}\), the map \(\mu_P\) corresponds
	precisely to the descended map \(\mu_{Q}\).  Hence we obtain a commutative diagram
	\[
	\begin{tikzcd}[row sep=large]
		T^{*}\mathbf{F} \ar[r,"\mu_P"] \ar[d,"\cong"'] & \mathscr{H}\!om(V,V) \ar[d,"\cong"] \\[4mm]
		\mu_{2}^{-1}(0)/\!/_{\chi_{2}}G_{2} \ar[r,"\mu_{Q}"'] & \mathfrak{g}_{1}.
	\end{tikzcd}
	\]
	Because \(\mu^{-1}(0)=\mu_{1}^{-1}(0)\cap\mu_{2}^{-1}(0)\), then the theorem follows as in the proof of Theorem \ref{main1}. 
\end{proof}

\begin{rem}\label{sym iso}
	If the weights of the parabolic Higgs bundles are generic—equivalently, if the character
	\(\chi\) used to define \(\mathfrak{R}_{\chi}(\mathbf{v})\) is generic—then the isomorphism
	\(\Psi\) is in fact a symplectic isomorphism.  Indeed, under the genericity assumption
	\(\mathbf{Higgs}^{\circ}_P\) is isomorphic to the cotangent bundle \(T^{*}\mathbf{M}_P\).
	Moreover, the isomorphism \(T^{*}\mathbf{F}\cong\mu_{2}^{-1}(0)/\!/_{\chi_{2}}G_{2}\) provided
	by Lemma~\ref{A_n^*} is symplectic, and the maps \(\mu_P\) and \(\mu_Q\) are the respective
	moment maps.  By Proposition~4.1.3 and Corollary~4.1.5 of \cite{GZ09}, the isomorphism
	\(\Psi\) can be identified with the induced isomorphism of cotangent bundles
	\(T^{*}\mathbf{M}_P \cong T^{*}\mathfrak{R}_{\chi}(\mathbf{v})\) resulting from
	Theorem~\ref{main1}.  Consequently, when both sides are equipped with the canonical
	symplectic structure of a cotangent bundle, \(\Psi\) preserves the symplectic form.
\end{rem}

We now describe the isomorphism \(\Psi\) explicitly at the level of points.

Given a homologically trivial semistable parabolic Higgs bundle \((E,\phi)\), 
we have \(E\cong\mathcal{O}_{\mathbb{P}^1}^{\oplus r}\) and may identify 
\(H^0(\mathbb{P}^1,E)\cong V\cong\mathbb{C}^{r}\).  
By the exact sequence~\eqref{exseq}, the Higgs field \(\phi\) is equivalent to a collection of 
\(n\) linear maps \(\phi_i\in\operatorname{Hom}_{x_i}^{\mathrm{s.f.}}(\mathbb{C}^{r},\mathbb{C}^{r})\) 
satisfying \(\sum_{i=1}^{n}\phi_i=0\).  
At each marked point \(x_i\) the parabolic filtration on \(E|_{x_i}\) induces a filtration 
\(F^{\bullet}(\mathbb{C}^{r})\).  Restricting \(\phi_i\) yields maps 
\(F^{j}(\mathbb{C}^{r})\to F^{j+1}(\mathbb{C}^{r})\); together with the inclusions 
\(F^{j+1}(\mathbb{C}^{r})\hookrightarrow F^{j}(\mathbb{C}^{r})\) these data assemble into a 
representation \(\mathsf{V}\) of the doubled quiver \(\overline{Q}\).  
Because \(\sum_i\phi_i=0\), the representation lies in \(\mu^{-1}(0)\); the semistability of 
\((E,\phi)\) guarantees that \(\mathsf{V}\) is \(\chi\)-semistable.  
Thus \(\mathsf{V}\) defines a point of \(\mathfrak{M}_{\chi}(\mathbf{v})\).

Conversely, let \(\mathsf{V}\in\mu^{-1}(0)\) be a \(\chi\)-semistable representation.  
For each leg \(i=1,\dots,n\) we have maps
\[
\mathbb{C}^{r}\xrightarrow{\;f_1^{\,i}\;}\mathbb{C}^{\gamma_1^{\,i}}
\xrightarrow{\;g_1^{\,i}\;}\mathbb{C}^{r},
\qquad \sum_{i=1}^n g_1^{\,i}\circ f_1^{\,i}=0 .
\]
Semistability of \(\mathsf{V}\) implies that each map \(f_1^{\,i}\) is injective, thereby producing 
\(n\) distinct filtrations on \(\mathbb{C}^{r}\).  The condition \(\mathsf{V}\in\mu^{-1}(0)\) forces 
the composite \(g_1^{\,i}\circ f_1^{\,i}\) to preserve the corresponding filtration strongly.  
Using the exact sequence~\eqref{exseq} again, this collection of maps determines a parabolic 
Higgs field on \(\mathcal{O}_{\mathbb{P}^1}^{\oplus r}\), and hence a semistable parabolic 
Higgs bundle.

\section{A closed formula for the Littlewood–Richardson coefficients}\label{section 4}

Recall the construction preceding Proposition~\ref{moduli of bundle}.  On \(\mathbf{F}\times\mathbb{P}^1\) there is a trivial universal bundle \(\mathcal{E} = \mathcal{O}^{\oplus r}\).  For each marked point \(x\in I\) we have universal quotient sequences
\[
\mathcal{E}|_{x}=Q_{\sigma_x}(\mathcal{E}|_{x})
\twoheadrightarrow Q_{\sigma_x-1}(\mathcal{E}|_{x})
\twoheadrightarrow\cdots\twoheadrightarrow
Q_{1}(\mathcal{E}|_{x}) \twoheadrightarrow Q_{0}(\mathcal{E}|_{x})=0 .
\]
Define the theta line bundle on \(\mathbf{F}\) by
\[
\Theta_{\mathbf{F}}:=\bigl(\det R\pi_{\mathbf{F}*}\mathcal{E}\bigr)^{-K}
\otimes\bigotimes_{x\in I}
\Bigl(\bigotimes_{i=1}^{\sigma_x-1}\bigl(\det Q_{i}(\mathcal{E}|_{x})\bigr)^{d_i(x)}\Bigr)
\otimes\bigl(\det(\mathcal{E}|_{\mathbf{F}\times q})\bigr)^l,
\]
where \(\pi_{\mathbf{F}}:\mathbf{F}\times\mathbb{P}^1\to\mathbf{F}\) is the projection,
\(q\in\mathbb{P}^1\setminus I\) is a fixed closed point, and the integer \(l\) is chosen as
\[
l=\frac{Kr-\sum_{x\in I}\sum_{i=1}^{\sigma_x-1}d_i(x)\bigl(r-\gamma_{\sigma_x-i}(x)\bigr)}{r}.
\]

\begin{rem}
	The factors \(\bigl(\det R\pi_{\mathbf{F}*}\mathcal{E}\bigr)^{-K}\) and
	\(\bigl(\det(\mathcal{E}|_{\mathbf{F}\times q})\bigr)^l\) in the definition of
	\(\Theta_{\mathbf{F}}\) are actually trivial line bundles.  Their purpose, together with the
	specific choice of \(l\), is to ensure that the induced action of \(\operatorname{GL}_r\) on
	the fibres of \(\Theta_{\mathbf{F}}\) has weight zero.
\end{rem}

\begin{prop}\label{quantizition}
	The restriction of \(\Theta_{\mathbf{F}}\) to the semistable locus \(\mathbf{F}^{ss}\) descends
	to an ample line bundle \(\Theta\) on the moduli space \(\mathbf{M}_P\).  Moreover,
	\[
	\operatorname{H}^{0}(\mathbf{M}_P, \Theta)\cong \operatorname{H}^{0}(\mathbf{F}, \Theta_{\mathbf{F}})^{\operatorname{SL}_r}.
	\]
\end{prop}

\begin{proof}
	That \(\Theta_{\mathbf{F}}|_{\mathbf{F}^{ss}}\) descends to an ample line bundle on
	\(\mathbf{M}_P\) follows from the results of \cite{NR93} (for rank two) and \cite{P96} (for
	arbitrary rank).  By construction,
	\(\operatorname{H}^{0}(\mathbf{M}_P,\Theta)\cong \operatorname{H}^{0}(\mathbf{F}^{ss},\Theta_{\mathbf{F}})^{\operatorname{SL}_r}\).
	The strong version of the quantization conjecture, proved by Teleman
	(\cite{Tel00}, Proposition~2.11), yields an isomorphism
	\(\operatorname{H}^{0}(\mathbf{F}^{ss},\Theta_{\mathbf{F}})^{\operatorname{SL}_r}\cong
	\operatorname{H}^{0}(\mathbf{F},\Theta_{\mathbf{F}})^{\operatorname{SL}_r}\), completing the proof.
\end{proof}

To compute the dimension of \(\operatorname{H}^{0}(\mathbf{M}_P, \Theta)\), we introduce some further notations.

For a partition \(\underline{\lambda}=(\lambda_1\geq \lambda_2\geq \cdots \geq \lambda_r)\in \mathcal{P}_r\), 
the corresponding Schur polynomial is defined as
\[
S_{\underline{\lambda}}(z_1,\dots , z_r)=\dfrac{\det(z_j^{\lambda_i+r-i})}{\det(z_j^{r-i})}.
\]

Given a parabolic type \(\Sigma:=\{I,K,\{n_i(x)\},\{a_i(x)\}\}\), we associate to each \(x\in I\) a partition
\[
\underline{\lambda_x}=(\overbrace{K-a_1(x)= \cdots= K-a_1(x)}^{n_1(x)}> \cdots > \overbrace{K-a_{\sigma_x(x)}=\cdots = K-a_{\sigma_x}(x)}^{n_{\sigma_x}(x)})
\]
and set
\[
S_{\Sigma}(z_1, \dots , z_r)=\prod_{x\in I}S_{\underline{\lambda_x}}, \qquad 
|\Sigma|=\sum_{x\in I}|\lambda_x|.
\]

\begin{prop}[\cite{SZh20}, Theorem 4.3]\label{Verlinde formula}
	With the notation above, the dimension of \(\operatorname{H}^{0}(\mathbf{M}_P, \Theta)\) is given by
	\[
	V(\Sigma):=\dfrac{1}{r(r+K)^{r-1}}
	\sum_{\overrightarrow{v}}
	\exp\!\Bigl(2\pi i\Bigl(-\dfrac{|\Sigma|}{r(r+K)}\Bigr)\sum_{i=1}^{r}v_i\Bigr)
	\Bigl(\prod_{i<j}\Bigl(2\sin\frac{\pi(v_i-v_j)}{r+K}\Bigr)^{2}\Bigr)
	S_{\Sigma}\!\Bigl(\exp 2\pi i\frac{\overrightarrow{v}}{r+K}\Bigr),
	\]
	where the summation index \(\overrightarrow{v}=(v_1, \dots, v_r)\) runs over integers satisfying
	\[
	0=v_r< \cdots < v_2< v_1 < r+K.
	\]
\end{prop}

Consider a partition
\[
\underline{\lambda}=(\overbrace{\lambda_1= \cdots= \lambda_1}^{n_1(\underline{\lambda})}>
\overbrace{\lambda_2= \cdots =\lambda_2}^{n_2(\underline{\lambda})}> \cdots >
\overbrace{\lambda_{\sigma_{\lambda}}= \cdots= \lambda_{\sigma_{\lambda}}}^{n_{\sigma_{\lambda}}(\underline{\lambda})})
\in \mathcal{P}_r,
\]
which determines the integers \(\{n_i(\underline{\lambda})\}\).  
Given \(n\) partitions \(\underline{\lambda}^1,\dots,\underline{\lambda}^n\) and an arbitrary set of distinct points \(I=\{x_1,\dots,x_n\}\subset\mathbb{P}^1\), we obtain a parabolic type
\[
\Sigma(\underline{\lambda}^1,\dots,\underline{\lambda}^n)=\{I,K,\{n_i(\underline{\lambda}^j)\},\{\lambda_1^j-\lambda_i^j\}\},
\]
where the integer \(K\) is chosen so that \(K > \lambda_1^j-\lambda_i^j\) for all \(i,j\).

If this parabolic type satisfies Condition~\ref{Condition}, then by the discussion preceding Proposition~\ref{moduli of bundle} the moduli space of semistable parabolic bundles on \(\mathbb{P}^1\) of type \(\Sigma\) is a GIT quotient of a product of partial flag varieties.  
Each such flag variety
\[
\mathbf{F}(\underline{\lambda}) = \operatorname{Flag}\bigl(\mathbb{C}^r,\overrightarrow{\gamma}(\underline{\lambda})\bigr)
\]
has dimension vector
\[
\overrightarrow{\gamma}(\underline{\lambda}) =\bigl(\gamma_1(\underline{\lambda}),
\gamma_2(\underline{\lambda}),\dots,\gamma_{\sigma_{\lambda}}(\underline{\lambda})\bigr),
\qquad 
\gamma_i(\underline{\lambda}) = \sum_{j=i+1}^{\sigma_{\lambda}} n_j(\underline{\lambda}).
\]

On \(\mathbf{F}(\underline{\lambda})\) there are universal subbundles
\[
\mathcal{O}^{\oplus r}= \mathcal{V}_{0}\supset\cdots\supset\mathcal{V}_{\sigma_{\lambda}-1}
\supset\mathcal{V}_{\sigma_{\lambda}}=0,
\]
and the corresponding universal quotient bundles
\[
\mathcal{O}^{\oplus r}= Q_{\sigma_{\lambda}}\twoheadrightarrow\cdots\twoheadrightarrow Q_{1}
\twoheadrightarrow Q_0=0,
\]
where \(Q_i = \mathcal{O}^{\oplus r}/\mathcal{V}_i\) for \(0\le i\le\sigma_{\lambda}\).  
We define the line bundle
\begin{align*}
	\mathcal{L}(\underline{\lambda})
	&=\bigl(\det(\mathcal{V}_{\sigma_{\lambda}-1}/\mathcal{V}_{\sigma_{\lambda}})\bigr)^{\otimes\lambda_{\sigma_{\lambda}}}
	\otimes\cdots\otimes
	\bigl(\det(\mathcal{V}_{1}/\mathcal{V}_{2})\bigr)^{\otimes\lambda_{2}}
	\otimes\bigl(\det(\mathcal{V}_{0}/\mathcal{V}_{1})\bigr)^{\otimes\lambda_{1}} \\
	&\cong (\det\mathcal{V}_{\sigma_{\lambda}})^{\otimes-\lambda_{\sigma_{\lambda}}}
	\otimes\cdots\otimes(\det\mathcal{V}_{1})^{\otimes\lambda_{2}-\lambda_1}
	\otimes(\det\mathcal{V}_{0})^{\otimes\lambda_{1}} \\
	&\cong \bigl(\det(\mathcal{V}_{0}/\mathcal{V}_{\sigma_{\lambda}})\bigr)^{\otimes\lambda_{\sigma_{\lambda}}}
	\otimes\cdots\otimes
	\bigl(\det(\mathcal{V}_{0}/\mathcal{V}_{2})\bigr)^{\otimes\lambda_{2}-\lambda_{3}}
	\otimes\bigl(\det(\mathcal{V}_{0}/\mathcal{V}_{1})\bigr)^{\otimes\lambda_{1}-\lambda_{2}} \\
	&\cong (\det Q_{\sigma_{\lambda}})^{\lambda_{\sigma_{\lambda}}}
	\otimes(\det Q_{\sigma_{\lambda}-1})^{\lambda_{\sigma_{\lambda}-1}-\lambda_{\sigma_{\lambda}}}
	\otimes\cdots\otimes
	(\det Q_{2})^{\lambda_{2}-\lambda_{3}}
	\otimes(\det Q_{1})^{\lambda_{1}-\lambda_{2}} .
\end{align*}

By the Borel--Weil--Bott theorem we have an isomorphism of \(\operatorname{GL}_r\)-representations
\[
V(\underline{\lambda}) \cong \operatorname{H}^{0}\bigl(\mathbf{F}(\underline{\lambda}),\mathcal{L}(\underline{\lambda})\bigr).
\]

\begin{thm}\label{thm closed formula}
	Let \(\underline{\lambda}^1,\dots,\underline{\lambda}^n,\underline{\nu}\in\mathcal{P}_r\) be partitions satisfying
	\[
	|\underline{\lambda}^1|+\cdots+|\underline{\lambda}^n| = |\underline{\nu}|.
	\]
	Choose an integer \(k\) such that the parabolic type
	\(\Sigma(\underline{\lambda}^1,\dots,\underline{\lambda}^n,\underline{\nu}^{*})\)
	fulfills Condition~\ref{Condition}.  
	Then the multiplicity of the irreducible representation \(V(\underline{\nu})\) in the tensor product
	\(V(\underline{\lambda}^1)\otimes\cdots\otimes V(\underline{\lambda}^n)\) is given by the number
	\(V\bigl(\Sigma(\underline{\lambda}^1,\dots,\underline{\lambda}^n,\underline{\nu}^{*})\bigr)\)
	appearing in Proposition~\ref{Verlinde formula}.
	
	In particular, for three partitions \(\underline{\lambda},\underline{\mu},\underline{\nu}\) as in the Introduction with \(|\underline{\lambda}|+|\underline{\mu}|=|\underline{\nu}|\), the \emph{\textbf{LR}} coefficient is given by
	\[
	c_{\underline{\lambda}\,\underline{\mu}}^{\underline{\nu}}
	= \dfrac{1}{r(r+K)^{r-1}}
	\sum_{\overrightarrow{v}}
	\exp\!\Bigl(2\pi i\Bigl(-\frac{|\Sigma|}{r(r+K)}\Bigr)\sum_{i=1}^{r}v_i\Bigr)
	\Bigl(\prod_{i<j}\Bigl(2\sin\frac{\pi(v_i-v_j)}{r+K}\Bigr)^{2}\Bigr)
	S_{\Sigma}\!\Bigl(\exp 2\pi i\frac{\overrightarrow{v}}{r+K}\Bigr),
	\]
	where
	\[
	|\Sigma|=|{}^K\underline{\lambda}|+|{}^K\underline{\mu}|+|{}^K\underline{\nu}^{*}|,\qquad
	S_{\Sigma}=S_{{}^K\underline{\lambda}}\,S_{{}^K\underline{\mu}}\,S_{{}^K\underline{\nu}^{*}},
	\]
	the sum runs over integers \(\overrightarrow{v}=(v_1,\dots,v_r)\) with
	\(0=v_r<\cdots<v_2<v_1<r+K\), and the integer \(K\) is chosen so that
	\[
	\frac{\lambda_1-\lambda_{\sigma_{\lambda}}+\mu_1-\mu_{\sigma_{\mu}}+\nu_1-\nu_{\sigma_{\nu}}}{K}
	<\frac{1}{r}.
	\]
\end{thm}

\begin{proof}
	With the notation above, set
	\[
	\mathbf{F}:=\mathbf{F}(\underline{\lambda}^1,\dots,\underline{\lambda}^n,\underline{\nu}^{*})
	=\prod_{i=1}^{n}\mathbf{F}(\underline{\lambda}^i)\times\mathbf{F}(\underline{\nu}^{*})
	\]
	and equip it with the line bundle
	\[
	\mathcal{L}:=\mathcal{L}(\underline{\lambda}^1)\boxtimes\cdots\boxtimes
	\mathcal{L}(\underline{\lambda}^n)\boxtimes\mathcal{L}(\underline{\nu}^{*}).
	\]
	By the Borel--Weil--Bott theorem we obtain an isomorphism of \(\operatorname{GL}_r\)-modules
	\[
	\operatorname{H}^{0}(\mathbf{F},\mathcal{L})
	\cong \bigotimes_{i=1}^{n}V(\underline{\lambda}^i)\otimes V(\underline{\nu}^{*})
	= V(\underline{\lambda}^1)\otimes\cdots\otimes V(\underline{\lambda}^n)\otimes V(\underline{\nu}^{*}).
	\]
	
	Consider the moduli space of semistable parabolic vector bundles on \(\mathbb{P}^1\) of rank \(r\),
	degree \(0\) and parabolic type \(\Sigma(\underline{\lambda}^1,\dots,\underline{\lambda}^n,\underline{\nu}^{*})\).
	The choice of \(K\) guarantees that Condition~\ref{Condition} holds; hence by
	Proposition~\ref{moduli of bundle} this moduli space is isomorphic to the GIT quotient
	\(\mathbf{F}/\!/\operatorname{SL}_r\) with a suitable polarization.
	
	The parabolic type \(\Sigma(\underline{\lambda}^1,\dots,\underline{\lambda}^n,\underline{\nu}^{*})\)
	is precisely the one used to define the theta bundle \(\Theta_{\mathbf{F}}\) on \(\mathbf{F}\), and one
	checks that \(\Theta_{\mathbf{F}}\cong\mathcal{L}\).  Moreover, the integer \(l\) in the definition of
	\(\Theta_{\mathbf{F}}\) is chosen so that the centre of \(\operatorname{GL}_r\) acts trivially on the
	fibres; similarly, because \(|\underline{\lambda}^1|+\cdots+|\underline{\lambda}^n|=|\underline{\nu}|\),
	the centre also acts trivially on the fibres of \(\mathcal{L}\).  Consequently,
	\[
	\operatorname{H}^{0}(\mathbf{F},\Theta_{\mathbf{F}})\cong \operatorname{H}^{0}(\mathbf{F},\mathcal{L})
	\]
	as \(\operatorname{GL}_r\)-representations.  Therefore,
	\begin{align*}
		\dim\!\bigl(V(\underline{\lambda}^1)\otimes\cdots\otimes V(\underline{\lambda}^n)
		\otimes V(\underline{\nu}^{*})\bigr)^{\operatorname{GL}_r}
		&=\dim \operatorname{H}^{0}(\mathbf{F},\mathcal{L})^{\operatorname{GL}_r}               \\
		&=\dim \operatorname{H}^{0}(\mathbf{F},\Theta_{\mathbf{F}})^{\operatorname{GL}_r}      \\
		&=\dim \operatorname{H}^{0}(\mathbf{F},\Theta_{\mathbf{F}})^{\operatorname{SL}_r}      \\
		&=\dim \operatorname{H}^{0}(\mathbf{M}_P,\Theta)                                        \\
		&=V\bigl(\Sigma(\underline{\lambda}^1,\dots,\underline{\lambda}^n,\underline{\nu}^{*})\bigr).
	\end{align*}
	The third equality uses that the centre of \(\operatorname{GL}_r\) acts trivially, the fourth follows
	from Proposition~\ref{quantizition}, and the last equality is Proposition~\ref{Verlinde formula}.
\end{proof}

\begin{example}
	Some special cases of Theorem~\ref{thm closed formula} have already been computed in Lemma~4.6 of \cite{SZh20}.  
	Consider the partition
	\[
	\underline{\omega_s}=(\overbrace{1=\cdots=1}^{s}>\overbrace{0=\cdots=0}^{r-s}).
	\]
	For a partition \(\underline{\lambda}=(\lambda_1\ge\lambda_2\ge\cdots\ge\lambda_r)\) with \(\lambda_r\ge0\),
	the tensor product decomposes as
	\[
	V(\underline{\lambda})\otimes V(\underline{\omega_s})
	=\bigoplus_{\underline{\mu}\in Y(\underline{\lambda},\underline{\omega_s})} V(\underline{\mu}),
	\]
	where the set \(Y(\underline{\lambda},\underline{\omega_s})\) consists of all partitions obtained by adding \(1\) to each \(\lambda_i\) in all possible \(r\) choices of the index \(i\).  
	Hence \(c_{\underline{\lambda}\,\underline{\omega_s}}^{\underline{\mu}}\) equals \(1\) if \(\underline{\mu}\in Y(\underline{\lambda},\underline{\omega_s})\) and equals \(0\) otherwise.  
	This is exactly the result computed in Lemma~4.7 of \cite{SZh20}.
\end{example}

\begin{rem}
	In \cite{Bea96}, Proposition~4.3, Beauville describes the space of conformal blocks in genus zero with three marked points.  
	The dimension of this space is likewise given by a Verlinde formula, and the space can be canonically identified with the space of generalized theta functions on the moduli stack of parabolic bundles on \(\mathbb{P}^1\) with three marked points (see \cite{LaSor97}).  
	However, in that setting the space of generalized theta functions on the moduli stack does not coincide with the corresponding space on the moduli space, because one lacks a good codimension estimate for the unstable locus (cf.~\cite{MoYo21}).  
	This explains why our description differs from the one appearing in \cite{Bea96}, Proposition~4.3.
\end{rem}

\section{The nilpotent case of the additive Deligne--Simpson problem}\label{section 5}

Given \(n\) conjugacy classes \(\mathfrak{c}_i\subset\mathfrak{gl}_r\), the (additive) Deligne--Simpson problem asks whether there exist matrices \(A_i\in\mathfrak{c}_i\) such that \(\sum_{i=1}^n A_i = 0\). A solution \(\{A_i\}\) is called irreducible if the matrices have no common proper invariant subspace.

One motivation for this problem comes from linear systems of differential equations on the Riemann sphere. Consider a meromorphic matrix function \(A(t)\) on \(\mathbb{CP}^1\) with poles at \(x_1,\dots,x_n\). The system \(dX/dt = A(t)X\) is Fuchsian if all poles are logarithmic; it can then be written as
\[
\frac{dX}{dt}= \Bigl(\sum_{i=1}^n\frac{A_i}{z-x_i}\Bigr)X,
\qquad A_i\in\mathfrak{gl}_r,\qquad \sum_{i=1}^n A_i = 0 .
\]
Thus the existence of a Fuchsian system with prescribed local monodromy data is equivalent to the solvability of a corresponding Deligne--Simpson problem. For a thorough discussion we refer to \cite{Kos04}.

The additive Deligne--Simpson problem has been studied extensively. Kostov \cite{Kos96,Kos03} obtained existence criteria for several special cases, including the nilpotent case. A complete criterion for arbitrary conjugacy classes was later given by Crawley‑Boevey \cite{CB03} via quiver representations. In \cite{Soi16}, Soibelman used the ``very good'' property of certain moduli stacks of parabolic bundles on \(\mathbb{P}^1\) to study the solution space of the problem.

Motivated by Crawley‑Boevey's result and the isomorphism we establish in Theorem\ref{main2}, in this section we attack the nilpotent case of the Deligne--Simpson problem from a geometric viewpoint, using parabolic Higgs bundles on \(\mathbb{P}^1\) and the parabolic Hitchin map.

Assume now that each conjugacy class \(\mathfrak{c}_i\) is the conjugacy class of a fixed nilpotent matrix \(N_i\). Let \(\gamma_i^j = \operatorname{rk}(N_i^{\,j})\); the sequence \(\{\gamma_i^j\}\) determines \(\mathfrak{c}_i\) uniquely. (Recall that \(\gamma_i^j-\gamma_i^{j+1}\ge\gamma_i^k-\gamma_i^{k+1}\) for all \(j\le k\).) Consider rank \(r\), degree \(0\) homologically trivial parabolic Higgs bundles on \(\mathbb{P}^1\) of type
\[
\Sigma=\{I,K,\{n_j(x_i)\},\{a_j(x_i)\}\},
\]
where the weights satisfy Condition~\ref{Condition} and where
\[
\gamma_j(x_i)=\sum_{k=j+1}^{\sigma_{x_i}}n_k(x_i)=\gamma_i^j .
\]
Denote the corresponding moduli space by \(\mathbf{Higgs}^{\circ}_P\).

Let \((E,\phi)\) be a homologically trivial parabolic Higgs bundle corresponding to a closed point of \(\mathbf{Higgs}^{\circ}_P\). Choose the local basis \(dz/(z-x_i)\) for \(\omega_{\mathbb{P}^1}(D_I)|_{x_i}\) and fix an isomorphism \(\operatorname{H}^0(\mathbb{P}^1,E)\cong \mathbb{C}^{\oplus r}\). By discussion in Section~\ref{section 2}, the pair \((E,\phi)\) is equivalent to a collection of \(n\) linear maps \(A_i:\mathbb{C}^{\oplus r}\to \mathbb{C}^{\oplus r}\) that strongly preserve the filtration on \(\mathbb{C}^{\oplus r}\) induced by the parabolic structure of \(E\) and satisfy \(\sum_i A_i = 0\). At this stage, however, we cannot yet control the conjugacy class of each \(A_i\); for instance, it is not clear whether \(\operatorname{rk} A_i = \gamma_i^1\).

We therefore study the parabolic Hitchin map \(h_P: \mathbf{Higgs}_P\rightarrow \mathbf{H}_P\).  
Recall that the Hitchin base corresponding to the conjugacy classes \(\{\mathfrak{c}_i\}\) is
\[
\mathbf{H}_P:=\prod_{j=1}^{r} H^{0}\Bigl(\mathbb{P}^1,\,
\omega_{\mathbb{P}^1}^{\otimes j}\otimes\mathcal{O}_{\mathbb{P}^1}
\bigl(\sum_{i=1}^n(j-\varepsilon_j(x_i))x_i\bigr)\Bigr).
\]
For a parabolic Higgs bundle \((E,\phi)\), its image under \(h_P\) consists of the sections
\(\alpha_j=\operatorname{Tr}(\wedge^{j}\phi)\in H^{0}\bigl(\mathbb{P}^1,(\omega_{\mathbb{P}^1}(D_I))^{\otimes j}\bigr)\).  
The orders of vanishing of \(\{\alpha_j\}\) at the marked points \(x_i\) determine the conjugacy class of the
associated residue maps \(A_i\); in particular, \(\operatorname{rk}(A_i^{\,j})=\gamma_i^{\,j}\) holds precisely when
the order of vanishing of \(\alpha_j\) at \(x_i\) equals \(\varepsilon_j(x_i)\).

For such sections \(\{\alpha_j\}\) to exist in \(\mathbf{H}_P\), we require the inequalities
\[
-2j+\sum_{i=1}^n\bigl(j-\varepsilon_j(x_i)\bigr)\ge 0\qquad (1\le j\le r).
\]
Using the definition of \(\varepsilon_j(x_i)\), one checks that these inequalities are equivalent to the single
condition
\[
-2r+\sum_{i=1}^n\bigl(r-\varepsilon_r(x_i)\bigr)\ge 0 .
\]
Since \(\varepsilon_r(x_i)=\max\{\,n_j(x_i)\,\}\), this condition becomes
\begin{equation}\label{DS condition}
	2r\le\sum_{i=1}^n\gamma_i^{1}.
\end{equation}

We must also decide whether there exists an \(\alpha\in\mathbf{H}_P\) whose associated spectral curve is integral.
A criterion from \cite{BNR89}, Remark~3.1, states that if one can choose a section
\[
\alpha_r\in H^{0}\Bigl(\mathbb{P}^1,\,
\omega_{\mathbb{P}^1}^{\otimes r}\otimes\mathcal{O}_{\mathbb{P}^1}
\bigl(\sum_{i=1}^n(r-\varepsilon_r(x_i))x_i\bigr)\Bigr)
\]
whose divisor is not of the form \(mD\) for any integer \(m>1\) dividing \(r\), then the corresponding spectral
curve is integral.  Consequently we obtain:

\begin{lem}\label{integral lemma}
	Assume that either \(2r<\sum_{i=1}^n\gamma_i^{1}\), or \(2r=\sum_{i=1}^n\gamma_i^{1}\) and the greatest
	common divisor of the set \(\{\gamma_i^{1}\}\) is either \(1\) or does not divide \(r\).  
	Then there exists an element \(\alpha_0\in\mathbf{H}_P\) for which the associated spectral curve
	\(C_{\alpha_0}\) is integral.
\end{lem}

\begin{thm}\label{DSP}
	If the conditions of Lemma~\ref{integral lemma} are satisfied, then the nilpotent case of the Deligne--Simpson problem admits irreducible solutions. Moreover, these solutions can be constructed explicitly.
\end{thm}

\begin{proof}
	By Lemma~\ref{integral lemma} there exists \(\alpha_0\in\mathbf{H}_P\) whose associated spectral curve \(C_{\alpha_0}\) is integral.  The conditions that \(C_{\alpha}\) be integral and that each coefficient \(\alpha_j\) vanish to order exactly \(\varepsilon_j(x_i)\) at \(x_i\) are both open.  Hence we may choose \(\alpha\in\mathbf{H}_P\) that simultaneously satisfies the hypotheses of Theorem~\ref{parabolic BNR} and has the vanishing orders \(\operatorname{ord}_{x_i}(\alpha_j)=\varepsilon_j(x_i)\).
	
	Let \(\tilde{C}_{\alpha}\) be the normalization of \(C_{\alpha}\) and \(\tilde{\pi}_{\alpha}:\tilde{C}_{\alpha}\to\mathbb{P}^1\) the projection.  By Theorem~\ref{parabolic BNR},
	\[
	h_P^{-1}(\alpha)\cong\bigl\{\text{line bundles }\mathcal{L}\text{ on }\tilde{C}_{\alpha}
	\mid \deg\tilde{\pi}_{\alpha*}\mathcal{L}=0\bigr\}.
	\]
	The locus of homologically trivial parabolic Higgs bundles corresponds to those \(\mathcal{L}\) with
	\(\tilde{\pi}_{\alpha*}\mathcal{L}\cong\mathcal{O}_{\mathbb{P}^1}^{\oplus r}\); this is equivalent to
	\(\dim \operatorname{H}^0(\tilde{C}_{\alpha},\mathcal{L})=r\), which is a non‑empty affine open condition inside
	\(h_P^{-1}(\alpha)\).
	
	Choosing \((E,\phi)\in h_P^{-1}(\alpha)\) with \(E\) homologically trivial yields, via the discussion preceding Lemma~\ref{integral lemma}, a solution of the Deligne--Simpson problem.  This solution is irreducible: if it admitted a proper parabolic Higgs subbundle, the characteristic polynomial of \((E,\phi)\) would factor, contradicting the integrality of \(C_{\alpha}\).
	
	Summarizing, one can construct such solutions explicitly as follows:
	\begin{enumerate}
		\item Start with the \(n\) nilpotent conjugacy classes \(\mathfrak{c}_i\) and form the corresponding Hitchin base \(\mathbf{H}_P\).
		\item Select a generic \(\alpha\in\mathbf{H}_P\) satisfying the conditions of Theorem~\ref{parabolic BNR} and compute the normalization \(\tilde{C}_{\alpha}\) of the spectral curve \(C_{\alpha}\) with projection \(\tilde{\pi}_{\alpha}:\tilde{C}_{\alpha}\to\mathbb{P}^1\).
		\item Choose a line bundle \(\mathcal{L}\) on \(\tilde{C}_{\alpha}\) such that \(\tilde{\pi}_{\alpha*}\mathcal{L}\cong\mathcal{O}_{\mathbb{P}^1}^{\oplus r}\).  
		The direct image \(\tilde{\pi}_{\alpha*}\mathcal{L}\) then carries a natural parabolic Higgs field induced by the covering structure of \(\tilde{C}_{\alpha}\), and the resulting parabolic Higgs bundle provides an irreducible solution to the Deligne--Simpson problem.
	\end{enumerate}
\end{proof}

\begin{rem}
	As noted in the introduction, the nilpotent case of the Deligne--Simpson problem was already solved by Kostov \cite{Kos03} and Crawley‑Boevey \cite{CB03}.  Their results are in fact more comprehensive: provided \(2r\le\sum_{i=1}^n\gamma_i^{1}\) and excluding two special types (both occurring only for \(n=3\); see \cite{CB03}, §5), the nilpotent case admits irreducible solutions.  
	
	The approach presented here is more geometric and has the advantage of producing explicit solutions.  It is natural to conjecture that even when the hypothesis of Lemma~\ref{integral lemma} is weakened, one can still find an element \(\alpha\in\mathbf{H}_P\) whose associated spectral curve is integral.
\end{rem}

\bibliographystyle{plain}

\bibliography{ref}

\end{document}